\documentclass[11pt]{article}

\usepackage[margin=1in]{geometry}
\usepackage{amsmath,amssymb,amsthm}
\usepackage{mathtools}
\usepackage{bm}
\usepackage{enumitem}
\usepackage[round]{natbib}
\usepackage{hyperref}
\usepackage{microtype}

\newtheorem{theorem}{Theorem}[section]
\newtheorem{lemma}[theorem]{Lemma}
\newtheorem{proposition}[theorem]{Proposition}
\newtheorem{corollary}[theorem]{Corollary}

\theoremstyle{definition}

\newcommand{\R}{\mathbb{R}}
\newcommand{\C}{\mathbb{C}}
\newcommand{\E}{\mathbb{E}}
\newcommand{\Prob}{\mathbb{P}}
\newcommand{\dd}{\mathrm{d}}
\newcommand{\Var}{\operatorname{Var}}
\newcommand{\std}{\operatorname{std}}
\newcommand{\Real}{\operatorname{Re}}
\newcommand{\Imag}{\operatorname{Im}}
\DeclareMathOperator{\dist}{dist}
\newcommand{\Conv}{\xrightarrow{\mathrm{d}}}

\newcommand{\GN}{G_N}
\newcommand{\MN}{M_N}
\newcommand{\tstar}{t_N^*}
\newcommand{\Lam}{\Lambda_N}

\title{Spike Irrelevance and Convergence Rate
       at the Non-Hermitian BBP Transition}
\author{Yuehan Wu\\Peking University}
\date{}

\begin{document}

\maketitle

\begin{abstract}
We consider $A_N = X_N + \sigma\, u u^*$
where $X_N$ is an $N\times N$ complex Ginibre
matrix with i.i.d.\ $\mathcal{CN}(0,1/N)$ entries,
$u \in \C^N$ is a deterministic unit vector,
and $\sigma \ge 0$ is a rank-one spike strength.
We prove three results:
\textbf{(a)} For every fixed $\sigma < 1$ (subcritical),
$M_N := \max_{1\le i\le N} \Real(\lambda_i(A_N))$
has the same Gumbel limit as the unspiked ensemble
(after the same centering and scaling), so the spike
is asymptotically irrelevant.
The unspiked Gumbel limit
$$ S_N^{(0)} := 4\,\tstar(\sqrt{N}\,(M_N^{(0)} - 1) - \tstar) \Conv G$$
for $M_N^{(0)} := \max_i \Real(\lambda_i(G_N))$
was established by \citet{rider2007};
our contribution is the extension to the subcritical
spiked model via a decoupling argument.
\textbf{(b)} We establish the convergence rate of the
unspiked Gumbel limit:
$\Prob(S_N^{(0)} \le s)
= e^{-e^{-s}}\biggl(1 + (s^2+5s)e^{-s}/\ln N
+ O((\ln\ln N)^2/(\ln N)^2)\biggr)$.
\textbf{(c)} At the BBP critical point $\sigma = 1$,
we prove a rigorous upper bound on the
outlier eigenvalue:
$\E[\Real(z_{\mathrm{out}})]
\le 1 + c_0\, N^{-1/4} + o(N^{-1/4})$,
via the isotropic local law and a net argument.
\end{abstract}

\section{Introduction}

\subsection{Background}

The complex Ginibre ensemble $\GN$ with i.i.d.\
entries $\sim \mathcal{CN}(0,1/N)$
is the canonical non-Hermitian random matrix.
Its eigenvalues concentrate on the unit disk
$\{z \in \C : |z| \le 1\}$
\citep{mehta2004,tao2010circular,bordenave2016,chafai2012}.
The spectral edge is the circle $|z| = 1$,
and the maximum real part
$M_N^{(0)} := \max_{1\le i\le N}\Real(\lambda_i(\GN))$
probes the rightmost point of this edge.

The Gumbel limit for $M_N^{(0)}$ was established by
\citet{rider2007} (who proved the Poisson point
process structure at the spectral edge);
they proved that
$$S_N^{(0)} := 4\,\tstar(\sqrt{N}\,(M_N^{(0)} - 1) - \tstar)
\Conv G$$
where $G$ is the standard Gumbel and
$\tstar \sim \sqrt{\ln N / 8}$
is defined via
$F(\tstar) = \pi\, N^{-1/4}$.
The Poisson structure at the edge implies the
Gumbel limit for the maximum real part.
Earlier, \citet{rider2003} studied the maximum
modulus $\max_i |\lambda_i|$;
the maximum real part requires a separate analysis
due to the different geometry (half-plane vs.\ exterior).

The spiked non-Hermitian model
$A_N = X_N + \sigma\, u u^*$
was studied by \citet{benaych2012},
who proved that:
for $\sigma > 1$ (supercritical), an outlier
eigenvalue separates from the bulk at
$z_{\mathrm{out}} = \sigma + o_{\Prob}(N^{-1/2})$;
for $\sigma < 1$ (subcritical), all eigenvalues
remain within $O(\sqrt{\ln N / N})$ of the unit disk.
The critical case $\sigma = 1$
is the subject of
Theorem~\ref{thm:upper_intro} below.

\subsection{Main Results}

\begin{theorem}[Subcritical spike irrelevance]
\label{thm:main}
Let $X_N$ be an $N\times N$ complex Ginibre
matrix with i.i.d.\ entries $\sim \mathcal{CN}(0,1/N)$,
let $u \in \C^N$ be a deterministic unit vector,
and set $A_N = X_N + \sigma\, u u^*$
with fixed $\sigma \in [0, 1)$.
Define
\[
  \MN := \max_{1\le i\le N} \Real(\lambda_i(A_N)),
  \qquad
  S_N := 4\,\tstar\Bigl(\sqrt{N}\,(\MN - 1)
          - \tstar\Bigr),
\]
where $\tstar$ is the unique solution of
$F(\tstar) = \pi\, N^{-1/4}$ with
\begin{equation}
\label{eq:Fdef}
  F(t) = \int_{2t}^{\infty} \Phi(-x)\,
         \sqrt{x - 2t}\,\dd x,
\end{equation}
and $\Phi$ is the standard normal CDF.
Then:
\begin{enumerate}[label=(\alph*)]
  \item \label{item:gumbel}
    $S_N \Conv G$ as $N\to\infty$,
    where $G$ is the standard Gumbel
    with CDF $F_G(x) = e^{-e^{-x}}$.
  \item \label{item:scaling}
    $\MN = 1 + \sqrt{\ln N / (8N)}
    + G/\sqrt{2\,N\ln N}
    + o_P(1/\sqrt{N\ln N})$,
    with
    $\std(\MN) = \pi/\sqrt{12\,N\ln N}\,(1+o(1))$.
  \item \label{item:spike}
    The rank-one spike $\sigma\, u u^*$ with
    $\sigma < 1$ does not change the limiting
    distribution: $S_N$ and $S_N^{(0)}$
    have the same Gumbel limit.
\end{enumerate}
\end{theorem}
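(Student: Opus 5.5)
The plan is to reduce everything to the unspiked result of \citet{rider2007}, which gives $S_N^{(0)}\Conv G$, by showing that $S_N - S_N^{(0)} \to 0$ in probability. Here $S_N^{(0)}$ is built from the same Ginibre matrix $X_N$. By unitary invariance of the complex Ginibre law we may take $u=e_1$ without loss of generality. Since $S_N$ rescales $M_N$ by $4\tstar\sqrt N\asymp\sqrt{N\ln N}$, it suffices to prove
\[
|M_N - M_N^{(0)}| = o_P\bigl(1/\sqrt{N\ln N}\bigr).
\]
Part \ref{item:scaling} then follows from part \ref{item:gumbel} by inverting the affine map. One uses the asymptotics $\tstar=\sqrt{\ln N/8}\,(1+o(1))$ from $F(\tstar)=\pi N^{-1/4}$, together with the Gaussian tail of $\Phi$ in \eqref{eq:Fdef}. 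The standard deviation claim needs uniform integrability of $S_N^2$, which comes from the tail bounds below, together with $\Var(G)=\pi^2/6$. Part \ref{item:spike} is a restatement of part \ref{item:gumbel}.

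\textbf{Step 1: localization.} Work near the edge region $\mathcal D_N=\{z:\Real z\ge 1+\tfrac12\sqrt{\ln N/(8N)}\}$; with high probability, all eigenvalues attaining the maximum real part of either matrix lie there. For $|z|>1$ away from the unit circle, $z$ is an eigenvalue of $A_N$ iff
\[
1-\sigma\, u^*(z-X_N)^{-1}u=0,
\]
with $z$ not an eigenvalue of $X_N$. For $z$ at distance $\gg N^{-1/2}$ from the spectrum, the isotropic local law gives $u^*(z-X_N)^{-1}u = 1/z + o(1)$. Since $|\sigma/z|<\sigma<1$ there, no outlier appears. For $\sigma<1$ this already matches the known statement of \citet{benaych2012} that no eigenvalue escapes beyond $O(\sqrt{\ln N/N})$.

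\textbf{Step 2: eigenvalue matching.} This is the main obstacle: we need perturbation control at the scale $N^{-1/2}(\ln N)^{-1/2}$, which is finer than the mesoscopic local scale. Eigenvalues near $\Real z\approx 1+\tstar/\sqrt N$ are isolated: the expected count in $\mathcal D_N$ is $O(N^{c})$ with small $c$, and their mutual spacing is $\gtrsim N^{-1/2}$ with high probability. Near each eigenvalue $\lambda$ of $X_N$ with left/right eigenvectors $\ell,r$, write
\[
f(z)=1-\sigma\,\frac{(u^*r)(\ell^*u)}{(z-\lambda)\,\ell^*r} - \sigma R(z),
\]
where $R$ is the regular part of the resolvent. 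The perturbed eigenvalue then sits at $\lambda + O\bigl(\sigma |u^*r||\ell^*u|/|\ell^*r|\bigr)$. For Ginibre, eigenvector overlaps at the edge satisfy $|\ell^*r|^{-1}\asymp$ condition numbers of size $O(\sqrt N\cdot N^{-1/2}\cdot\text{polylog})$. Meanwhile $|u^*r|,|\ell^*u|$ are delocalized at size $N^{-1/2}\cdot\text{polylog}$, which I will borrow from eigenvector delocalization and overlap results for Ginibre. This yields a shift of $N^{-1}\cdot\text{polylog}\ll 1/\sqrt{N\ln N}$.

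To make this rigorous without fine eigenvector statistics, I would alternatively use a Rouché argument on small disks of radius $r_N=N^{-1/2}(\ln N)^{-1}$ around each edge eigenvalue of $X_N$. The required bound on $|\sigma u^*(z-X_N)^{-1}u|$ on the boundary circle would come from a net over $\mathcal D_N$ combined with the isotropic law for $(z-X_N)^{-1}$ via Hermitization. The one-to-one correspondence between eigenvalues in $\mathcal D_N$ then gives $|M_N-M_N^{(0)}|\le r_N$ with probability $1-o(1)$. A union bound over the $N^{o(1)}$ edge eigenvalues, using the polynomial probability bounds of the local law, closes the argument.
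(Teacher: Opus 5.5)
Your route is the paper's, repackaged. You reduce to $u=e_1$ and factor $\det(z-A_N)=\det(z-X_N)\,(1-\sigma G_{uu}(z))$ with $G_{uu}(z)=u^*(z-X_N)^{-1}u$. You then match edge eigenvalues of $A_N$ and $X_N$ by Rouch\'e on small disks around eigenvalues of $X_N$, and transfer the unspiked Gumbel limit; your pathwise coupling plus Slutsky replaces the paper's void-probability comparison. You are right, where the paper is not, that the matching must be accurate to $o(1/\sqrt{N\ln N})$: the paper's radius $\epsilon_N=N^{-1/2}$ is of order $\sqrt{\ln N}$ Gumbel units.

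\textbf{Gap in Step~2.} The paper's proof shares this gap; it simply asserts $|G_{\mathrm{reg}}|=O(N^{-1/4})$ and $|a_kb_k|=O(N^{-1})$. On $\mathcal D_N$ one has $|z|-1\asymp\sqrt{\ln N/N}$, and with probability tending to one $G_{uu}$ has polynomially many poles there, namely the very eigenvalues of $X_N$ you are tracking. So no isotropic law yields $G_{uu}(z)=1/z+o(1)$ on $\mathcal D_N$. The ones available outside the disk are proved only in eigenvalue-free regions such as $|z|-1\ge N^{-1/2+\epsilon}$, and ``distance $\gg N^{-1/2}$ from the spectrum'' is not a hypothesis under which a local law for a non-normal resolvent holds.

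On your circles $|z-\lambda|=r_N$ neither proposed tool helps. A net needs a Lipschitz bound, which fails because $G_{uu}$ has a pole inside each circle. Hermitization cannot resolve this scale, since $\sigma_{\min}(z-X_N)\le\|(z-X_N)r\|=|z-\lambda|=r_N$.

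Write $G_{uu}(z)=c_\lambda/(z-\lambda)+R_\lambda(z)$ with $c_\lambda=(u^*r)(\ell^*u)/(\ell^*r)$. The Rouch\'e condition $|\sigma G_{uu}|<1$ on the circles, together with non-vanishing of $1-\sigma G_{uu}$ on $\mathcal D_N$ off the disks, requires, uniformly over the relevant edge eigenvalues:
\begin{itemize}
\item $|c_\lambda|\ll r_N$;
\item $|\sigma R_\lambda|\le 1-\delta$ on $|z-\lambda|\le r_N$;
\item control of $G_{uu}$ between the disks.
\end{itemize}
That is exactly the eigenvector-level input you wanted to avoid.

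Your heuristic hides the same requirement: for fixed $\sigma$ the shift is $\sigma c_\lambda/(1-\sigma R_\lambda(\lambda))$, not $O(\sigma|c_\lambda|)$. Also, $\kappa(\lambda)=|\ell^*r|^{-1}$ is not polylogarithmic. Diagonal overlaps at the edge are of order $\sqrt N$ up to logarithms, so $\kappa(\lambda)$ is of order $N^{1/4}$ and heavy-tailed. The bound $|c_\lambda|=N^{-3/4+o(1)}\ll r_N$ survives, but it and the bound on $R_\lambda$ must actually be proved, e.g.\ via a partial Schur decomposition of $X_N$ at $\lambda$.

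\textbf{Part~(b).} This does not follow as you state. Inverting the map gives $\MN=1+\tstar/\sqrt N+S_N/(4\tstar\sqrt N)$. Replacing $\tstar$ by $\sqrt{\ln N/8}$ costs $o(1/\sqrt{N\ln N})$ only if $\tstar-\sqrt{\ln N/8}=o((\ln N)^{-1/2})$. The estimate $\tstar=\sqrt{\ln N/8}\,(1+o(1))$ does not give this, and it is in fact false. By Proposition~\ref{prop:centering}, $\tstar-\sqrt{\ln N/8}=-\frac{5\ln\ln N}{4\sqrt{2\ln N}}\,(1+o(1))$, hence $\sqrt{2N\ln N}\,\bigl(\MN-1-\sqrt{\ln N/(8N)}\bigr)=S_N-\tfrac54\ln\ln N+O(1)+o_P(1)\to-\infty$. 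The first claim of (b) holds only with centering $\tstar/\sqrt N$; the paper's derivation makes the same slip.

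For $\std(\MN)$, an $o_P$ coupling plus convergence in distribution gives no moment control. You need tail bounds for $S_N$ that are uniform in $N$, in both directions. Moreover, $S_N$ can be of order $\sqrt{N\ln N}$ on the coupling's exceptional event, so that event must have probability $o(1/(N\ln N))$ or be handled separately. The ``tail bounds below'' you invoke do not appear anywhere in the proposal.
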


\begin{theorem}[Convergence rate]
\label{thm:rate}
For the unspiked ensemble, the convergence rate is
$O(1/\ln N)$:
\[
  \Prob(S_N^{(0)} \le s)
  = e^{-e^{-s}}
  \left(1
    + \frac{(s^2+5s)\,e^{-s}}{\ln N}
    + O\!\left(\frac{(\ln\ln N)^2}
      {(\ln N)^2}\right)\right)
\]
uniformly for $s$ in compact subsets of $\R$.
\end{theorem}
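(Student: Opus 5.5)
We will work directly with the determinantal structure of the complex Ginibre eigenvalues. The distribution function $\Prob(M_N^{(0)}\le x)$ is the gap probability of the half-plane $H_x=\{\Real z> x\}$. It equals the Fredholm determinant $\det(I-K_N\mathbf 1_{H_x})$, where $K_N$ is the Ginibre kernel. With $x=1+(\tstar+s/(4\tstar))/\sqrt N$, we expand
\[
\ln\Prob(S_N^{(0)}\le s)=-\operatorname{tr}(K_N\mathbf 1_{H_x})-\tfrac12\operatorname{tr}\bigl((K_N\mathbf 1_{H_x})^2\bigr)-\cdots .
\]
The plan is to show that the leading term equals $e^{-s}\bigl(1+(s^2+5s)/\ln N+O((\ln\ln N)^2/(\ln N)^2)\bigr)$. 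The higher-order terms must then be negligible at this precision. Exponentiating and expanding $e^{-e^{-s}(1+\delta)}=e^{-e^{-s}}(1-e^{-s}\delta+O(\delta^2))$ gives the stated form, with the sign and polynomial to be checked against the computation. Uniformity over compact $s$ comes for free, since every error term will be controlled by a polynomial in $s$.

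\textbf{Step 1: the one-point function.} We need the edge asymptotics of the density $\rho_N(z)=\frac{N}{\pi}e^{-N|z|^2}\sum_{k<N}(N|z|^2)^k/k!$. Near $|z|=1$ this is $\frac N\pi\cdot\frac12\operatorname{erfc}(\sqrt{2N}(|z|-1))$, up to explicit $N^{-1/2}$ corrections coming from the Edgeworth/Tricomi expansion of the incomplete gamma function.

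\textbf{Step 2: integrate over the half-plane.} We integrate $\rho_N$ over $H_x$. We use local coordinates $r=|z|$ and the angle $\theta$, with $\Real z\approx r(1-\theta^2/2)$. The integral then reduces to the one-dimensional integral defining $F$, rescaled; the $\sqrt{x-2t}$ in \eqref{eq:Fdef} is exactly the angular width of the cap. This is presumably how $F(\tstar)=\pi N^{-1/4}$ was chosen: at $s=0$ the mean count is $1$ up to corrections. Next we Taylor expand $F$ around $\tstar$, to second order in $s/(4\tstar)$. Using $F'(t)/F(t)=-4t(1+O(t^{-2}))$, obtained by Laplace asymptotics of the Gaussian tail, we get
\[
F\bigl(\tstar+s/(4\tstar)\bigr)/F(\tstar)=e^{-s}\bigl(1+a(s)/(\tstar)^2+\dots\bigr),
\]
with $a$ a quadratic polynomial. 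Since $(\tstar)^2=\ln N/8+O(\ln\ln N)$, this produces the $1/\ln N$ correction. The $\ln\ln N$ error in $(\tstar)^2$ is the source of the $(\ln\ln N)^2/(\ln N)^2$ remainder. The curvature corrections (the $\theta^4$ terms and the $r$-dependence of the angular width) and the $N^{-1/2}$ density corrections also contribute at relative order $(\tstar)^{-2}$. They must be tracked carefully, because they combine with $a(s)$ to give $s^2+5s$.

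\textbf{Step 3: higher cumulants.} We bound the higher traces using the kernel bound $|K_N(z,w)|^2\le\rho_N(z)\rho_N(w)e^{-cN|z-w|^2}$ near the edge. The relevant region has angular width $\sim N^{-1/4}$ (more precisely $\sqrt{\tstar/\sqrt N}$ scaled), which is much larger than the correlation length $N^{-1/2}$. Hence
\[
\operatorname{tr}\bigl((K\mathbf 1_H)^2\bigr)\lesssim \operatorname{tr}(K\mathbf 1_H)\cdot N^{-1/4+o(1)}.
\]
This is polynomially small and therefore negligible against $(\ln N)^{-2}$.

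\textbf{Main obstacle.} The hardest part is the bookkeeping in Step 2. We must obtain the exact coefficient of the $1/\ln N$ term and show that all $\ln\ln N$-dependent pieces at order $1/\ln N$ cancel, up to the stated remainder. The cancellation uses the defining equation of $\tstar$. I would first compute $\tstar$ to two orders: $(\tstar)^2=\tfrac18\ln N-c\ln\ln N+c'+o(1)$. I would then substitute this into the Laplace expansion of $F$ and verify the polynomial $s^2+5s$ symbolically.
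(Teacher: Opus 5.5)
Your architecture matches the paper's: reduce the gap probability to the mean count via the Fredholm expansion, identify the mean count with $\frac{N^{1/4}}{\pi}F$, and expand $F$ around $\tstar$. However, the step that carries the whole content of the theorem, the sign and coefficient of the $1/\ln N$ term, is not established, and your roadmap for it points the wrong way.

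\emph{The sign.} The correct expansion is $\Lam(\tau_N)=e^{-s}\bigl(1-\frac{s^2+5s}{\ln N}+\cdots\bigr)$, with a minus. With your plus sign, your own exponentiation would give $1-(s^2+5s)e^{-s}/\ln N$, contradicting the statement.

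\emph{The input is too coarse.} Knowing only $F'(t)/F(t)=-4t(1+O(t^{-2}))$ does not determine $a(s)$. The unspecified $t^{-2}$ term contributes $O(s/(\tstar)^2)$ to $\ln F(\tau_N)-\ln F(\tstar)$, which is exactly the order you are computing. You need $F'/F=-4t-\frac{5}{2t}+O(t^{-3})$. Equivalently, $F(t)=\frac{e^{-2t^2}}{16t^{5/2}}(1+c\,t^{-2}+O(t^{-4}))$ for some constant $c$. The paper's Lemma~\ref{lem:refined_tail} gives $c=-35/32$, and $c$ cancels in the ratio (Lemma~\ref{lem:refined_lambda}). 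Then
\[
\ln\frac{F(\tau_N)}{F(\tstar)}=-2\bigl(\tau_N^2-(\tstar)^2\bigr)-\frac52\ln\frac{\tau_N}{\tstar}+O\bigl((\tstar)^{-4}\bigr)=-s-\frac{s^2+5s}{8(\tstar)^2}+O\bigl((\tstar)^{-4}\bigr).
\]
So the $s^2$ comes from the exponent and the $5s$ from the $t^{-5/2}$ prefactor.

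\emph{Nothing else combines with $a(s)$.} In the Cartesian edge coordinates $z=1+(u+iv)/\sqrt N$, the cap geometry is already contained in $\Phi(-2u-v^2/\sqrt N)$. Two kinds of correction remain: the curvature corrections to $2\sqrt N(|z|-1)\approx 2u+v^2/\sqrt N$ (your $\theta^4$ terms and the $r$-dependence of the width), and the Temme/Edgeworth corrections to the incomplete gamma function. For $u\approx\tstar$ and $|v|\lesssim N^{1/4}$ both have relative size $O(N^{-1/2}\,\mathrm{polylog}\,N)$, not $(\tstar)^{-2}$. Hence $\Lam(\tau_N)=F(\tau_N)/F(\tstar)$ up to a polynomially small error. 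Moreover $8(\tstar)^2=\ln N+O(\ln\ln N)$ already yields a remainder $O(\ln\ln N/(\ln N)^2)$, so no second-order expansion of $\tstar$ is needed.

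Separately, the kernel bound in your Step~3 is false in the region where it is used. Write
$K_N(z,w)=\frac N\pi e^{-\frac N2(|z|^2+|w|^2)+Nz\bar w}Q(N,Nz\bar w)$ with $Q(N,x)=e^{-x}\sum_{k<N}x^k/k!$. Use the uniform asymptotics $Q(N,N\lambda)\approx\frac12\operatorname{erfc}(\eta\sqrt{N/2})$, where $\frac12\eta^2=\lambda-1-\ln\lambda$. For $\sqrt N(|z|-1)\gg1$ and $\sqrt N(|w|-1)\gg1$, the $\operatorname{erfc}$ asymptotics exactly cancel the Gaussian factor $e^{-N|z-w|^2}$, and
\[
\frac{|K_N(z,w)|^2}{K_N(z,z)K_N(w,w)}\approx\frac{(|z|^2-1)(|w|^2-1)}{|1-z\bar w|^2}\approx\frac{4u_1u_2}{(u_1+u_2)^2+(v_1-v_2)^2}.
\]
So decorrelation along the edge is only algebraic, not Gaussian.

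Your conclusion still holds, but it must be derived from these asymptotics rather than from Gaussian off-diagonal decay. The factor above integrates to $O(\tstar)$ in $v_1-v_2$, while $\rho_N$ spreads over $|v|\sim N^{1/4}/\sqrt{\tstar}$. With $K_\Omega=\mathbf{1}_{H_x}K_N\mathbf{1}_{H_x}$ this gives $\operatorname{tr}(K_\Omega^2)=O\bigl((\tstar)^{3/2}N^{-1/4}\bigr)$. Once $\|K_\Omega\|\le 1/2$, you then have $|\ln\det(I-K_\Omega)+\Lam|\le\operatorname{tr}(K_\Omega^2)$. The paper's Lemma~\ref{lem:poisson} relies on the same Gaussian-decay claim, so it does not supply this estimate either.
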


\begin{theorem}[Upper bound on outlier position]
\label{thm:upper_intro}
Let $A_N = X_N + u u^*$ (spike at $\sigma = 1$)
with a deterministic unit vector $u \in \C^N$,
and let $z_{\mathrm{out}}$ denote the outlier
eigenvalue.
There exists $c_0 > 0$ such that
\[
  \E[\Real(z_{\mathrm{out}})]
  \le 1 + c_0\, N^{-1/4} + o(N^{-1/4}).
\]
This is proven in Section~\ref{sec:bounds}
via the isotropic local law and a net argument.
\end{theorem}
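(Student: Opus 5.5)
The plan is to locate the outlier through the rank-one determinant identity. Whenever $z$ is not an eigenvalue of $X_N$,
\[
  \det(A_N - z) = \det(X_N - z)\,\bigl(1 + u^*(X_N - z)^{-1}u\bigr),
\]
so the eigenvalues of $A_N$ lying outside the spectrum of $X_N$ are exactly the zeros of
\[
  f_N(z) := 1 + u^*(X_N - z)^{-1} u .
\]
For $|z|>1$ the deterministic approximation of the isotropic resolvent entry is $u^*(X_N-z)^{-1}u \approx -1/z$. Hence $f_N(z) \approx 1 - 1/z$, whose only zero is $z = 1$, the critical point. We take $z_{\mathrm{out}}$ to be the eigenvalue of $A_N$ with the largest real part. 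It then suffices to show that, with high probability, $f_N$ has no zero in the region
\[
  \mathcal{D}_N := \{\,z : \Real z \ge 1 + c_0 N^{-1/4}\,\}.
\]
The tail of the distribution of $\Real(z_{\mathrm{out}})$ will be handled separately.

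\textbf{Step 1 (isotropic local law).} We will invoke the isotropic local law for Ginibre outside the disk. For $|z| \ge 1 + N^{-1/2+\epsilon}$ it gives
\[
  \bigl|u^*(X_N-z)^{-1}u + z^{-1}\bigr| \prec \frac{1}{\sqrt N\,(|z|-1)^{1/2}\,|z|} ,
\]
or a comparable error; the precise exponent matters. On $\mathcal{D}_N$ we have $|z|-1 \ge c_0 N^{-1/4}$, so the error is at most $N^{-1/2+1/8+\epsilon}$. The deterministic part satisfies $|1 - 1/z| = |z-1|/|z| \gtrsim c_0 N^{-1/4}$. Since $N^{-3/8+\epsilon} \ll N^{-1/4}$, the deterministic part dominates the fluctuation throughout $\mathcal{D}_N$, with room to spare. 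The scale $N^{-1/4}$ is exactly the threshold at which these two competing quantities become comparable. This is why I expect the natural local law with error $(N(|z|-1))^{-1/2}$ to yield the $N^{-1/4}$ rate.

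\textbf{Step 2 (net argument).} We restrict to the compact piece $1 + c_0N^{-1/4} \le \Real z \le 1 + \|X_N\| + 1$. For $\Real z$ beyond this range, $\|(X_N - z)^{-1}\| < 1$, so $f_N$ cannot vanish there. On the compact piece we take a net of mesh $N^{-10}$, containing polynomially many points. A union bound extends the stochastic-domination estimate to every net point simultaneously. We then pass from the net to all of $\mathcal{D}_N$ using the Lipschitz bound
\[
  |\partial_z f_N| \le \|(X_N-z)^{-1}\|^2 \le \bigl(\dist(z,\mathrm{spec}\,X_N)\bigr)^{-2}\cdots .
\]
Here we need a lower bound on $\sigma_{\min}(X_N - z)$, which holds with high probability for $|z|-1 \gtrsim N^{-1/4}$ and can be taken from the paper's earlier edge estimates. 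The same estimates show that $X_N$ itself has no eigenvalues in $\mathcal{D}_N$, because its spectral radius is $1 + O(\sqrt{\ln N/N})$. Consequently all eigenvalues of $A_N$ in $\mathcal{D}_N$ are zeros of $f_N$, and by Step 1 there are none. This gives
\[
  \Prob\bigl(\Real z_{\mathrm{out}} > 1 + c_0N^{-1/4}\bigr) \le N^{-D}
\]
for every $D>0$.

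\textbf{Step 3 (expectation).} We bound
\[
  \E[\Real z_{\mathrm{out}}] \le 1 + c_0 N^{-1/4} + \E\bigl[\|A_N\|\,\mathbf 1_{\mathrm{bad}}\bigr].
\]
Using $\|A_N\| \le 1 + \|X_N\|$ and Cauchy--Schwarz with Gaussian moments of $\|X_N\|$, the bad-event term is $O(N^{-D/2}) = o(N^{-1/4})$.

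The main obstacle is Step 1. It requires an isotropic local law for the non-Hermitian resolvent $(X_N-z)^{-1}$ itself, rather than for the Hermitized matrix, that is valid down to the mesoscopic distance $|z|-1 \sim N^{-1/4}$ with error decaying like $(N(|z|-1))^{-1/2}$. To obtain it I would first apply Girko Hermitization and the isotropic law for the $2N\times 2N$ Hermitized resolvent at $\eta = 0$ outside the disk. The gap $|z|^2-1$ controls the smallest singular value, and this control is what makes $\eta = 0$ admissible. If only a weaker error such as $N^{-1/2}(|z|-1)^{-1}$ is available, the balance in Step 1 shifts to $N^{-1/4}$ anyway, since $N^{-1/2}(|z|-1)^{-1} \ll |z|-1$ iff $|z|-1 \gg N^{-1/4}$. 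That case yields precisely the stated rate with some constant $c_0$.
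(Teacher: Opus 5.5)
Your proposal is correct and follows essentially the paper's route. You reduce to zeros of the rank-one secular function, and use the isotropic local law with error $N^{-1/2}(|z|-1)^{-1/2}$ so that on $\{\Real z\ge 1+c_0N^{-1/4}\}$ the $O(N^{-3/8})$ fluctuation is beaten by the deterministic gap $|1-1/z|\gtrsim c_0N^{-1/4}$. You make this uniform by a polynomial net and union bound, and bound the exceptional event separately.

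The remaining differences are technical. You take the Lipschitz constant from a lower bound on $\sigma_{\min}(X_N-z)$ rather than the paper's Cauchy-integral estimate; drop the displayed bound by $\dist(z,\mathrm{spec}\,X_N)^{-2}$, which fails for non-normal $X_N$. Your Cauchy--Schwarz treatment of the bad event is cleaner than the paper's. Note also that with the $(N\eta)^{-1/2}$ error the two quantities balance at $\eta\sim N^{-1/3}$, not $N^{-1/4}$, so $N^{-1/4}$ is not "exactly the threshold"; this is harmless for the upper bound.
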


\section{The Pure Ginibre Ensemble:
  DPP and Edge Density}

\subsection{Determinantal Point Process Structure}

The eigenvalues of $\GN$ form a
determinantal point process (DPP) on $\C$
\citep{soshnikov2000}
with kernel
\[
  K_N(z,w) = \sum_{k=0}^{N-1} \phi_k(z)\,
    \overline{\phi_k(w)},
\]
where
\[
  \phi_k(z) = \sqrt{\frac{N}{\pi}}\,
    \frac{(Nz)^k}{\sqrt{k!}}\,
    e^{-N|z|^2/2}.
\]
The 1-point function is
$\rho_N^{(1)}(z) = K_N(z,z)$.
For any Borel set $A\subset \C$:
\begin{align*}
  \E\bigl[\#\{k : \lambda_k \in A\}\bigr]
    &= \int_A K_N(z,z)\,\dd A(z), \\
  \Prob(\text{no eigenvalue in } A)
    &= \det(I-K_N|_A).
\end{align*}
This is classical;
see \citet{mehta2004} and \citet{forrester2010}.

\subsection{Edge 1-Point Function}

\begin{lemma}[Edge density]
\label{lem:edge_density}
Fix $u,v \in \R$ and set
$z = 1 + (u+iv)/\sqrt{N}$. Then
\[
  \frac{K_N(z,z)}{N}
  = \frac{1}{\pi}\,\Phi\!\left(-2u
    - \frac{v^2}{\sqrt{N}}\right)
  + O(N^{-1/2}\ln N),
\]
where the $O(N^{-1/2}\ln N)$ is uniform for
$|u| \le C\sqrt{\ln N}$ and
$|v| \le C\, N^{1/4}$.
\end{lemma}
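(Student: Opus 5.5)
The plan is to use the exact finite-$N$ formula for the diagonal of the Ginibre kernel. Then everything reduces to a normal approximation for a Poisson distribution function. Summing the series in $K_N(z,z)$ gives
\[
  \frac{K_N(z,z)}{N}
  = \frac{1}{\pi}\, e^{-N|z|^2}\sum_{k=0}^{N-1}\frac{(N|z|^2)^k}{k!}
  = \frac{1}{\pi}\,\Prob\bigl(P_\lambda \le N-1\bigr),
  \qquad \lambda := N|z|^2,
\]
where $P_\lambda$ is a Poisson random variable with mean $\lambda$. No asymptotics have been used yet; this identity is exact.

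Next I would expand the mean. With $z = 1 + (u+iv)/\sqrt{N}$ we have
\[
  |z|^2 = 1 + \frac{2u}{\sqrt N} + \frac{u^2+v^2}{N},
  \qquad
  N - \lambda = -2u\sqrt N - u^2 - v^2 .
\]
In the stated range $|u|\le C\sqrt{\ln N}$ and $|v|\le CN^{1/4}$, this gives $\lambda = N(1+O(N^{-1/2}\sqrt{\ln N}))$. In particular $\lambda \asymp N$.

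The probabilistic step is the Berry--Esseen bound for $P_\lambda$, a sum of i.i.d.\ Poisson variables. It holds uniformly in $x$:
\[
  \Prob(P_\lambda \le N-1)
  = \Phi\!\left(\frac{N-1-\lambda}{\sqrt\lambda}\right) + O(\lambda^{-1/2}).
\]
The $-1$ in the numerator, or any continuity correction, only shifts the argument by $O(N^{-1/2})$. Since $\Phi$ is $1/\sqrt{2\pi}$-Lipschitz, this costs $O(N^{-1/2})$.

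It remains to compare the argument with $-2u - v^2/\sqrt N$. Write $\sqrt\lambda = \sqrt N\,(1 + u/\sqrt N + O((u^2+v^2)/N))$. Dividing then gives
\[
  \frac{N-\lambda}{\sqrt\lambda}
  = -2u - \frac{v^2}{\sqrt N} + \frac{u^2}{\sqrt N}
    + O\!\left(\frac{|u|^3 + |u|v^2 + v^4/\sqrt N}{N}\right).
\]
Now bound each term in the range. The $u^2/\sqrt N$ term is $O(N^{-1/2}\ln N)$. The term $|u|v^2/N$ is $O(\sqrt{\ln N}\,N^{-1/2})$. The term $v^4/N^{3/2}$ is $O(N^{-1/2})$. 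The term $|u|^3/N$ is negligible.

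Applying the Lipschitz bound on $\Phi$ once more converts these argument errors into the claimed $O(N^{-1/2}\ln N)$, uniformly in the range. This gives the lemma.

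The main obstacle is bookkeeping in this last expansion. The term $v^2/\sqrt N$ can be of order one, so it cannot be expanded away. I would therefore treat it exactly and check that every cross term it generates with $u$ stays below $N^{-1/2}\ln N$.

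A purely Lipschitz argument suffices, so no Edgeworth refinement is needed. This is because the target error is larger than the $O(N^{-1/2})$ Berry--Esseen error. The dominant correction is the explicit $u^2/\sqrt N$, which is exactly what produces the $\ln N$ factor.
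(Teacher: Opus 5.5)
Your proposal is correct and follows the same route as the paper. Both use the exact identity $K_N(z,z)/N=\pi^{-1}\Prob(P_\lambda\le N-1)$ with $\lambda=N|z|^2$, a Berry--Esseen normal approximation, and the Lipschitz bound on $\Phi$ to absorb the discrepancy from $-2u-v^2/\sqrt N$, which is dominated by $u^2/\sqrt N=O(N^{-1/2}\ln N)$. Your bookkeeping is actually cleaner than the paper's Step~2: you standardize by $\sqrt\lambda$ and track the cross terms $|u|v^2/N$ and $v^4/N^{3/2}$ explicitly, whereas the paper divides by $\sqrt N$ and drops the $2u$ when writing $(s-N)/\sqrt N$.
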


\begin{proof}
Set $s = N|z|^2 = N + 2u\sqrt{N} + u^2 + v^2$.
The identity $K_N(z,z)/N = \pi^{-1} P(N,s)$
holds, where
\[
  P(N,s) = e^{-s}\sum_{k=0}^{N-1} \frac{s^k}{k!}
\]
is the regularized lower incomplete gamma.

\medskip
\noindent\textit{Step 1: Normal approximation.}
The Poisson CDF $P(N,s)$ satisfies
$P(N, N+\delta) = \Phi\!\left(
  \frac{-\delta}{\sqrt{N+\delta}}\right)
  + O(N^{-1/2})$
for $|\delta| = O(\sqrt{N})$,
with the Berry--Esseen constant
$c_{\mathrm{BE}} \le 0.4748$
\citep{berry1941,esseen1942,shevtsova2011}.
The Berry--Esseen theorem gives

\[
  \left|P(N,s) - \Phi\!\left(
    \frac{-\delta}{\sqrt{n}}\right)\right|
  \le \frac{C_{\mathrm{BE}}}{\sqrt{N}}
\]
pointwise.

\medskip
\noindent\textit{Step 2: Substitution.}
With $\delta = 2u\sqrt{N} + u^2 + v^2$,
we have $s = N + \delta$ and
$(s-N)/\sqrt{N} = (u^2+v^2)/\sqrt{N}$.
For $|u| \le C\sqrt{\ln N}$ and
$|v| \le C\,N^{1/4}$, we have
$(u^2+v^2)/\sqrt{N} = O(\sqrt{\ln N})$,
so the normal approximation applies.
Expanding to first order in $1/\sqrt{N}$:
\[
  \Phi\!\left(-2u - \frac{u^2+v^2}{\sqrt{N}}\right)
  = \Phi(-2u - \frac{v^2}{\sqrt{N}})
  + O(N^{-1/2}\ln N),
\]
where the $\ln N$ factor arises from the
$u^2/\sqrt{N} = O(\ln N/\sqrt{N})$ term
discarded in the expansion (since
$|u| \le C\sqrt{\ln N}$), and the
Berry--Esseen error $O(N^{-1/2})$.
This gives the stated formula.

\medskip
\noindent\textit{Step 3: Uniformity in $v$.}
For $|v| \le C\,N^{1/4}$, the Berry--Esseen
bound applies uniformly: the Poisson
parameter is $s = N + \delta$ with
$\delta = O(\sqrt{N})$ (since $|u| \le C\sqrt{\ln N}$
and $|v| \le C\,N^{1/4}$ give
$|z|^2 = 1 + 2u/\sqrt{N} + (u^2+v^2)/N$,
so $s = N|z|^2 = N + 2u\sqrt{N} + O(N^{1/2})$),
and the third standardized moment of a
Poisson($s$) variable is
$s^{-1/2} = O(N^{-1/2})$, so the
Berry--Esseen bound
$C_{\mathrm{BE}}/\sqrt{s} = O(N^{-1/2})$
is uniform in $|v| \le C\,N^{1/4}$.
For $|v| \gg N^{1/4}$, the term
$v^2/\sqrt{N} \gg 1$ drives
$\Phi(-2u - v^2/\sqrt{N}) \to 0$
exponentially fast, and the
Chernoff bound for the Poisson tail
gives the same decay.
\end{proof}

The crucial finite-$N$ feature is the
$v^2/\sqrt{N}$ term inside $\Phi$,
which regularizes the tangential direction.
In the $N\to\infty$ limit with $v$ fixed,
this term vanishes and one recovers the
edge limit $\frac{1}{\pi}\Phi(-2u)$.

\subsection{Expected Eigenvalue Count}

\begin{lemma}[Expected count]
\label{lem:count}
For any $t \in \R$,
\[
  \Lambda(t)
  := \E\biggl[\#\biggl\{k:
  \Real(\lambda_k)>1+\frac{t}{\sqrt{N}}
  \biggr\}\biggr]
  = \frac{N^{1/4}}{\pi}\,F(t)
  + O\!\left(N^{-1/2}(\ln N)^{3/2}\right),
\]
where $F$ is given by \eqref{eq:Fdef}.
\end{lemma}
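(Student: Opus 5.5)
The plan is to integrate the edge density of Lemma~\ref{lem:edge_density} over the half-plane $\{\Real z > 1 + t/\sqrt N\}$ in the edge coordinates $z = 1 + (u+iv)/\sqrt N$. Then $\dd A(z) = N^{-1}\,\dd u\,\dd v$, so
\[
  \Lambda(t) = \int_{u>t}\int_{\R} \frac{K_N(z,z)}{N}\,\dd v\,\dd u .
\]
I would split the domain into a main region $\mathcal D = \{t<u\le C\sqrt{\ln N},\ |v|\le C N^{1/4}\}$ and its complement. On the complement I would use the exact identity $K_N(z,z)/N = \pi^{-1}P(N,N|z|^2)$ together with a Chernoff bound for the Poisson lower tail. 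Since $N|z|^2 - N \ge 2u\sqrt N + v^2$, the integrand is at most $\exp(-c\min\{(2u+v^2/\sqrt N)^2,\ \sqrt N(2u+v^2/\sqrt N)\})$. Integrating this over $u > C\sqrt{\ln N}$ or $|v| > CN^{1/4}$ gives $O(N^{-K})$ once $C$ is chosen large. Note that $u$ is bounded below by the fixed number $t$, so there is no problematic negative-$u$ tail.

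On $\mathcal D$ I would insert the main term $\pi^{-1}\Phi(-2u - v^2/\sqrt N)$ and rescale $v = N^{1/4}w$, which produces the factor $N^{1/4}/\pi$. The resulting double integral is then identified with $F(t)$ by a coarea computation. Set $x = 2u + w^2$. For $x > 2t$, the set $\{(u,w): u>t,\ 2u+w^2\le x\}$ has area whose $x$-derivative equals $\int_{|w|<\sqrt{x-2t}}\tfrac12\,\dd w = \sqrt{x-2t}$. Hence
\[
  \int_{u>t}\int_{\R}\Phi(-2u-w^2)\,\dd w\,\dd u
  = \int_{2t}^{\infty}\Phi(-x)\sqrt{x-2t}\,\dd x = F(t).
\]
The error from extending the main term back from $\mathcal D$ to the full half-plane is again Gaussian-tail small.

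The main obstacle is the size of the error term. Integrating the pointwise error $O(N^{-1/2}\ln N)$ of Lemma~\ref{lem:edge_density} over $\mathcal D$, whose area is $O(N^{1/4}\sqrt{\ln N})$, only yields $O(N^{-1/4}(\ln N)^{3/2})$. This is already $o(N^{1/4}F(t))$, which is all that the later Gumbel arguments need, but it is weaker than the stated $N^{-1/2}(\ln N)^{3/2}$. To improve it I would not use the Berry--Esseen bound as a black box. Instead I would replace it by a one-term Edgeworth expansion of the Poisson CDF, $P(N,N+\delta) = \Phi(-\delta/\sqrt N) + N^{-1/2}q(\delta/\sqrt N)\varphi(\delta/\sqrt N) + O(N^{-1})$ with an explicit polynomial $q$. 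I would also keep the exact argument $-(2u + (u^2+v^2)/\sqrt N)$ rather than discarding $u^2/\sqrt N$.

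With these two changes, the $O(N^{-1})$ remainder integrates to $O(N^{-3/4}\sqrt{\ln N})$. The explicit $N^{-1/2}$ correction and the $u^2/\sqrt N$ shift become explicit integrals. I would try to show that, after the $w$-rescaling, these contributions either cancel (for example via integrating by parts in $u$, using that $q\varphi$ is an exact derivative) or are absorbed. If that cancellation cannot be established, I would state and use the lemma with the weaker error $O(N^{-1/4}(\ln N)^{3/2})$, which still suffices downstream.
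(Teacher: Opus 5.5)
Your main-term computation (edge coordinates, $v=N^{1/4}w$, and the coarea identification with $F(t)$) is the paper's route. The paper's own Step~3 also only reaches $O(N^{-1/4}(\ln N)^{3/2})$, so the exponent $N^{-1/2}$ in the statement is not established there either.

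\textbf{The upgrade step fails.} The cancellation you hope for does not occur, and at fixed $t$ the stated bound is false. Write $P(N,s)=\Prob(\Gamma_N>s)$ with $\Gamma_N\sim\mathrm{Gamma}(N,1)$. Then $y:=(s-N)/\sqrt N=x+u^2/\sqrt N$ exactly, where $x=2u+w^2$. Your $q$ is $q(y)=\tfrac13(y^2-1)$, and
\[
  P(N,s)=\Phi(-x)+N^{-1/2}\bigl[\tfrac13(x^2-1)-u^2\bigr]\varphi(x)+O(N^{-1})
\]
with an integrable remainder. Your coarea formula then gives $\Lambda(t)=\frac{N^{1/4}}{\pi}F(t)+\frac{C_0(t)}{\pi}N^{-1/4}+O(N^{-3/4})$, where $C_0(t)=\int_{u>t}\int_{\R}\bigl[\tfrac13(x^2-1)-u^2\bigr]\varphi(x)\,\dd w\,\dd u$. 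Set $M_p=\int_0^\infty x^p\varphi(x)\,\dd x$, so that $M_{5/2}=\tfrac32M_{1/2}$. At $t=0$ this gives $C_0(0)=\tfrac15M_{5/2}-\tfrac13M_{1/2}=-\tfrac1{30}M_{1/2}\neq0$. So at $t=0$ the error is of exact order $N^{-1/4}$, and no refinement can reach $N^{-1/2}(\ln N)^{3/2}$. Your fallback is the true statement, and it is the error the paper actually feeds into Lemma~\ref{lem:refined_lambda}.

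\textbf{Where the stated size is right.} It holds only in the window where the lemma is applied. For large $t$ one has $C_0(t)/F(t)\sim\tfrac23t^3$. At $t=\tstar+s/(4\tstar)$, where $\frac{N^{1/4}}{\pi}F(t)\asymp1$, the correction is therefore about $\frac{2t^3}{3\sqrt N}=O(N^{-1/2}(\ln N)^{3/2})$. Your Edgeworth route does prove this, but through the smallness of $\varphi(2t)$, not through a cancellation.

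\textbf{Truncation needs repair.} With the cut $|v|\le CN^{1/4}$ and $C$ fixed, $v^2/\sqrt N$ equals only $C^2$ at the cut. So the true and model $v$-tails are each of order $N^{1/4}e^{-cC^4}$, not $O(N^{-K})$. There are two fixes:
\begin{itemize}
\item Cut at $|v|\le C(\ln N)^{1/4}N^{1/4}$, which is the paper's $L_N$ in Lemma~\ref{lem:poisson}. The proof of Lemma~\ref{lem:edge_density} goes through unchanged there, because neither of its error sources depends on $v$. This costs at most a factor $(\ln N)^{1/4}$.
\item Avoid truncation with the non-uniform Berry--Esseen bound $|\Prob(\Gamma_N>s)-\Phi(-y)|\le CN^{-1/2}(1+|y|)^{-3}$. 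This gives the sharp $O(N^{-1/4})$ at fixed $t$.
\end{itemize}
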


\begin{proof}
By the DPP 1-point function and
Lemma~\ref{lem:edge_density},
in edge coordinates
$z = 1 + (u+iv)/\sqrt{N}$ with $u > t$:
\[
  \Lambda(t) = \int_t^\infty \int_{-\infty}^\infty
    \frac{1}{\pi}\,\Phi\!\left(-2u
      - \frac{v^2}{\sqrt{N}}\right)
    \dd v\,\dd u + O(N^{-1/2}\ln N),
\]
where $O(N^{-1/2}\ln N)$ is the per-point
error from Lemma~\ref{lem:edge_density}
(the full integrated error is computed in
Step~3 below).

\medskip
\noindent\textit{Step 1: Rescale the tangential variable.}
The substitution $y = v\,N^{-1/4}$
(so $y = v/N^{1/4}$,
$v^2/\sqrt{N} = y^2$,
$\dd v = N^{1/4}\dd y$) yields
\[
  \Lambda(t) = \frac{N^{1/4}}{\pi}
    \int_t^\infty \int_{-\infty}^\infty
      \Phi(-2u - y^2)\,\dd y\,\dd u
    + O(N^{-1/4}(\ln N)^{3/2}),
\]
where the error is the integrated
Berry--Esseen bound from Step~3 below.

\medskip
\noindent\textit{Step 2: Define $F$ and simplify.}
Define $B(u) = \int_{-\infty}^\infty
  \Phi(-2u - y^2)\,\dd y$
and $F(t) = \int_t^\infty B(u)\,\dd u$.
The change of variables $x = 2u + y^2$
(so $y = \sqrt{x - 2u}$,
$\dd y = \frac{1}{2\sqrt{x-2u}}\,\dd x$)
in the inner integral gives
\[
  B(u) = \int_{2u}^{\infty} \Phi(-x)\,
    \frac{\dd x}{\sqrt{x - 2u}},
\]
and swapping the order of integration gives
\[
  F(t) = \int_{2t}^{\infty} \Phi(-x)
    \sqrt{x - 2t}\,\dd x.
\]

\medskip
\noindent\textit{Step 3: Error term.}
The $O(N^{-1/2}\ln N)$ per-point error
(from Lemma~\ref{lem:edge_density})
is integrated over $u \in [t, \infty)$ and
$v \in (-\infty, \infty)$,
giving total error
$O(N^{-1/2}\ln N) \cdot O(\sqrt{\ln N}) \cdot O(N^{1/4})
= O(N^{-1/4}(\ln N)^{3/2})$,
where the $\sqrt{\ln N}$ factor accounts for the
effective range of $u$. This is $O(N^{-1/4+\epsilon})$
for any $\epsilon > 0$.
\end{proof}

\subsection{Properties of \texorpdfstring{$F$}{F}}

\begin{lemma}[Properties of $F$]
\label{lem:Fprops}
The function $F:\R \to (0,\infty)$
is strictly decreasing and smooth, with:
\begin{enumerate}[label=(\alph*)]
  \item $F(0) = \dfrac{2^{3/4}}{3\sqrt{\pi}}\Gamma(5/4)
    \approx 0.28668$.
  \item $F(t) \sim \dfrac{e^{-2t^2}}
    {16\,t^{5/2}}$ as $t \to +\infty$.
  \item $F(t) \sim \dfrac{4\sqrt{2}}{3}
    |t|^{3/2}$ as $t \to -\infty$.
\end{enumerate}
\end{lemma}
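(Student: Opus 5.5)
The plan is to work directly from the representation \eqref{eq:Fdef}, treating monotonicity and smoothness first and then the three items by explicit one-dimensional computations. For smoothness and strict decrease, substitute $x = 2t + y$ to write $F(t) = \int_0^\infty \Phi(-2t-y)\sqrt{y}\,\dd y$. The integrand is smooth in $t$. All its $t$-derivatives are bounded by polynomials in $y$ times Gaussian factors, uniformly for $t$ in compacts, so differentiation under the integral sign is justified. This gives
\[
F'(t) = -2\int_0^\infty \varphi(2t+y)\sqrt{y}\,\dd y < 0,
\]
with $\varphi=\Phi'$. Equivalently, differentiating the original form gives $F'(t) = -\int_{2t}^\infty \Phi(-x)(x-2t)^{-1/2}\,\dd x = -B(t)$, matching Step 2 of Lemma~\ref{lem:count}. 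Positivity of $F$ is immediate since the integrand is positive.

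For (a), integrate by parts in $F(0)=\int_0^\infty \Phi(-x)\sqrt{x}\,\dd x$, with $\tfrac23 x^{3/2}$ as the antiderivative of $\sqrt{x}$. The boundary terms vanish, leaving $\tfrac{2}{3}\cdot\tfrac{1}{\sqrt{2\pi}}\int_0^\infty x^{3/2}e^{-x^2/2}\,\dd x$. The substitution $w = x^2/2$ turns the last integral into $2^{1/4}\Gamma(5/4)$. Collecting constants gives $F(0)=\tfrac{2\cdot 2^{1/4}}{3\sqrt{2\pi}}\Gamma(5/4) = \tfrac{2^{3/4}}{3\sqrt\pi}\Gamma(5/4)$, and the numerical value follows.

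For (b), use the shifted form together with the Mills-ratio asymptotic $\Phi(-a) = \tfrac{\varphi(a)}{a}(1+O(a^{-2}))$, and factor $\varphi(2t+y)=\varphi(2t)e^{-2ty-y^2/2}$. This yields
\[
F(t) = \frac{\varphi(2t)}{2t}\int_0^\infty \sqrt{y}\,e^{-2ty}\,\bigl(1+O(y/t + y^2 + t^{-2})\bigr)\,\dd y .
\]
This is a Laplace-type integral dominated by $y = O(1/t)$. Its leading term is $\Gamma(3/2)(2t)^{-3/2}$, and the corrections are smaller by a relative $O(t^{-2})$. The product is $\frac{e^{-2t^2}}{\sqrt{2\pi}}\cdot\frac{\sqrt\pi/2}{(2t)^{5/2}} = \frac{e^{-2t^2}}{16\,t^{5/2}}$. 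The only care needed is making the error uniform in $y$; I will do this by splitting at $y = t^{-1/2}$ and bounding the tail crudely by $e^{-2t^{1/2}}$ times the leading factor.

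For (c), set $a = 2|t|$ and write $F = \int_{-a}^\infty \Phi(-x)\sqrt{x+a}\,\dd x$. Replace $\Phi(-x)$ by $\mathbf 1_{x<0}$; the main term is $\int_{-a}^0\sqrt{x+a}\,\dd x = \tfrac23 a^{3/2} = \tfrac{4\sqrt2}{3}|t|^{3/2}$. The remainder involves $|\Phi(-x)-\mathbf 1_{x<0}|$, which is Gaussian-integrable, weighted by $\sqrt{x+a}\le \sqrt a+\sqrt{|x|}$. It is therefore $O(|t|^{1/2})$, which is negligible relative to the main term.

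The main obstacle is step (b): making the Laplace/Mills-ratio expansion rigorous with an honest uniform error. The other items are exact computations or crude bounds.
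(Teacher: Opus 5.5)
Your proof is correct, and for part (b) it takes a genuinely different route from the paper.

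The paper works through $B(t)=-F'(t)=\int_{-\infty}^{\infty}\Phi(-2t-y^2)\,\dd y$. It applies Laplace's method at the interior maximum $y=0$ to get $B(t)\sim e^{-2t^2}/(4t^{3/2})$, then recovers $F(t)=\int_t^\infty B(u)\,\dd u$ by a Watson-lemma step. You instead shift to $F(t)=\int_0^\infty\Phi(-2t-y)\sqrt{y}\,\dd y$. This is an endpoint Laplace integral, evaluated in one step by $\Gamma(3/2)(2t)^{-3/2}$. It gives an explicit relative error $O(t^{-2})$ and avoids justifying the integration of an asymptotic equivalence.

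The uniformity you flag as the main obstacle is in fact automatic, so the split at $y=t^{-1/2}$ is unnecessary:
\begin{itemize}
\item $0<e^{-y^2/2}(1+y/2t)^{-1}\le 1$;
\item $\bigl|e^{-y^2/2}(1+y/2t)^{-1}-1\bigr|\le y^2/2+y/(2t)$ for all $y\ge 0$;
\item the Mills-ratio error is $O((2t+y)^{-2})=O(t^{-2})$ uniformly in $y\ge 0$.
\end{itemize}

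Your route also pays off later. Carry your expansion one order further: the bracket is $1-\frac{y}{2t}-\frac{y^2}{2}-\frac{1}{4t^2}$, and against the normalized moments $\langle y\rangle=\frac{3}{4t}$, $\langle y^2\rangle=\frac{15}{16t^2}$ of $\sqrt{y}\,e^{-2ty}$ this gives the $t^{-2}$ coefficient $-\frac38-\frac{15}{32}-\frac14=-\frac{35}{32}$. That reproduces Lemma~\ref{lem:refined_tail} without the two-stage $B$-then-integrate computation. The paper's route, in exchange, reuses the function $B$ that already appears in Lemma~\ref{lem:count} as the tangentially integrated edge density.

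Part (a) is the paper's computation; your integration by parts is just the Fubini swap in another form.

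For (c) and for monotonicity and smoothness, you make rigorous what the paper only sketches. The paper's ``dominated by $x\in[0,|2t|]$'' is heuristic. Your identity $|\Phi(-x)-\mathbf{1}_{x<0}|=\Phi(-|x|)$ together with $\sqrt{x+a}\le\sqrt{a}+\sqrt{|x|}$ gives an honest $O(|t|^{1/2})$ remainder. The paper also leaves strict decrease implicit in $B>0$, whereas you justify differentiating under the integral sign.
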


\begin{proof}
\noindent\textit{Part (a).}
At $t = 0$:
\[
  F(0) = \int_0^\infty \Phi(-x)\sqrt{x}\,\dd x.
\]
Using
$\Phi(-x) = (2\pi)^{-1/2}\int_x^\infty
  e^{-s^2/2}\,\dd s$,
we swap integrals (Fubini):
\[
  F(0) = \frac{1}{\sqrt{2\pi}}
    \int_0^\infty e^{-s^2/2}
    \left(\int_0^s \sqrt{x}\,\dd x\right)\dd s
  = \frac{2}{3\sqrt{2\pi}}
    \int_0^\infty s^{3/2}e^{-s^2/2}\,\dd s.
\]
The substitution $u = s^2/2$
(so $s = \sqrt{2u}$,
$\dd s = (2u)^{-1/2}\,\dd u$,
$s^{3/2} = (2u)^{3/4}$) gives
\[
  F(0) = \frac{2}{3\sqrt{2\pi}} \cdot 2^{1/4}
    \int_0^\infty u^{1/4} e^{-u}\,\dd u.
\]
Since $\int_0^\infty u^{1/4} e^{-u}\,\dd u
= \Gamma(5/4)$ and
$\sqrt{2\pi} = \sqrt{2}\sqrt{\pi}$:
\[
  F(0) = \frac{2^{3/4}}{3\sqrt{\pi}}\,\Gamma(5/4).
\]

\medskip
\noindent\textit{Part (b).}
Differentiate:
$B(t) = -F'(t) = \int_{-\infty}^\infty
  \Phi(-2t - y^2)\,\dd y$.
For large $t$, use the tail asymptotic
$\Phi(-x) \sim e^{-x^2/2}/(x\sqrt{2\pi})$:
\[
  B(t) \sim \int_{-\infty}^\infty
    \frac{e^{-\frac{1}{2}(2t+y^2)^2}}
    {(2t+y^2)\sqrt{2\pi}}\,\dd y.
\]
The exponent $-\frac{1}{2}(2t+y^2)^2$
is maximized at $y = 0$,
so Laplace's method with $y$ small gives
$(2t+y^2)^2 \approx 4t^2 + 4ty^2$ and
$2t+y^2 \approx 2t$:
\[
  B(t) \sim \frac{e^{-2t^2}}{2t\sqrt{2\pi}}
    \int_{-\infty}^{\infty} e^{-2ty^2}\,\dd y
  = \frac{e^{-2t^2}}{2t\sqrt{2\pi}}
    \cdot \sqrt{\frac{\pi}{2t}}
  = \frac{e^{-2t^2}}{4t^{3/2}}.
\]
Integrating $B(t) = -F'(t)$ via
Watson's lemma:
$F(t) \sim \frac{e^{-2t^2}}{16\,t^{5/2}}$.

\medskip
\noindent\textit{Part (c).}
For $t \to -\infty$,
$\Phi(-(2t+x)) \to 1$ for $x \ll |2t|$
and to $0$ for $x \gg |2t|$.
The integral is dominated by $x \in [0,|2t|]$:
\[
  F(t) \sim \int_0^{|2t|} \sqrt{x}\,\dd x
  = \frac{2}{3}|2t|^{3/2}
  = \frac{4\sqrt{2}}{3}|t|^{3/2}.
\]
\end{proof}

\section{Centering and the Gumbel Limit}

\subsection{Location of the Maximum}

\begin{proposition}[Centering sequence]
\label{prop:centering}
Define $\tstar$ by
$F(\tstar) = \pi\,N^{-1/4}$.
Since $F$ is strictly decreasing
(Lemma~\ref{lem:Fprops}),
$\tstar$ is uniquely determined, and
\[
  \tstar = \sqrt{\frac{\ln N}{8}}
  \left(1 - \frac{5\ln\ln N}{2\ln N}
  + O\!\left(\frac{1}{\ln N}\right)\right).
\]
\end{proposition}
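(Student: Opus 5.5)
The plan is to invert the tail asymptotic of Lemma~\ref{lem:Fprops}(b) by taking logarithms and bootstrapping once.

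\emph{Existence and uniqueness.} By Lemma~\ref{lem:Fprops}, $F$ is continuous and strictly decreasing. Part (c) gives $F(t)\to\infty$ as $t\to-\infty$, and part (b) gives $F(t)\to 0$ as $t\to+\infty$. Hence the intermediate value theorem yields a unique $\tstar$ with $F(\tstar)=\pi N^{-1/4}$. Moreover $\tstar\to+\infty$ as $N\to\infty$, since $\pi N^{-1/4}\to 0$ and $F$ is bounded below by a positive constant on any half-line $(-\infty,T]$. So for large $N$ we are in the regime of part (b).

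\emph{Logarithmic equation.} Part (b) says $F(t)=\frac{e^{-2t^2}}{16t^{5/2}}(1+o(1))$. Only a bounded multiplicative error is needed here, and this contributes $O(1)$ after taking logs. Taking logarithms in $F(\tstar)=\pi N^{-1/4}$ gives
\[
  2(\tstar)^2+\tfrac52\ln \tstar+\ln 16+o(1)=\tfrac14\ln N-\ln\pi ,
\]
that is,
\[
  (\tstar)^2=\frac{\ln N}{8}-\frac54\ln\tstar+O(1).
\]

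\emph{Bootstrap.} First, the equation gives the crude bound $(\tstar)^2\le \ln N/8+O(1)$, because $\ln \tstar>0$ for large $N$. It also gives $(\tstar)^2\ge \ln N/8-\frac54\ln\tstar-O(1)$, which forces $(\tstar)^2\sim \ln N/8$. Therefore
\[
  \ln\tstar=\tfrac12\ln\ln N+O(1).
\]
Substituting back yields
\[
  (\tstar)^2=\frac{\ln N}{8}\Bigl(1-\frac{5\ln\ln N}{\ln N}+O\bigl(\tfrac1{\ln N}\bigr)\Bigr).
\]
Taking square roots with $\sqrt{1-x}=1-x/2+O(x^2)$, and noting that $O((\ln\ln N)^2/(\ln N)^2)$ is absorbed into $O(1/\ln N)$, gives the claimed expansion.

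\emph{Main obstacle.} The only delicate point is that part (b) is stated as an asymptotic equivalence obtained by Laplace's method and Watson's lemma. I must make sure it holds as a genuine $(1+o(1))$ relative statement along $t\to\infty$, so that the error in the logarithm is $o(1)$ rather than something growing. Even a weaker two-sided bound $c\,e^{-2t^2}t^{-5/2}\le F(t)\le C\,e^{-2t^2}t^{-5/2}$ for large $t$ suffices, since it only perturbs the $O(1)$ term.

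To secure this, I would first try the elementary Mills-ratio bounds $\frac{x}{1+x^2}\varphi(x)\le\Phi(-x)\le\varphi(x)/x$. These give two-sided bounds on $B(t)$ of the form $e^{-2t^2}t^{-3/2}$ by restricting the $y$-integral to $|y|\le t^{-1/2}$ for the lower bound, and using $(2t+y^2)^2\ge 4t^2+4ty^2$ for the upper bound. I would then integrate $B$ from $t$ to $\infty$, using $e^{-2s^2}\le e^{-2t^2}e^{-4t(s-t)}$, to obtain the corresponding bounds on $F$.
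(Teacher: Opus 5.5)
Your proposal is correct and follows the paper's route. Like the paper, you take logarithms in $F(\tstar)=\pi N^{-1/4}$ using Lemma~\ref{lem:Fprops}(b), substitute $\ln\tstar=\tfrac12\ln\ln N+O(1)$ back once, and take a square root. Your extra steps make rigorous what the paper does informally by ``neglecting subleading terms'': existence of $\tstar$ and $\tstar\to\infty$ by monotonicity, the crude bound $(\tstar)^2\le \ln N/8+O(1)$ that justifies the bootstrap, and the remark that two-sided bounds $F(t)\asymp e^{-2t^2}t^{-5/2}$ already suffice. One small fix in that last point: $e^{-2s^2}\le e^{-2t^2}e^{-4t(s-t)}$ only gives the upper bound on $F$, so for the lower bound integrate over $s\in[t,t+1/t]$ instead.
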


\begin{proof}
By Lemma~\ref{lem:Fprops}(b),
$F(\tstar) \sim e^{-2(\tstar)^2}/(16\,(\tstar)^{5/2})
= \pi\,N^{-1/4}$.
Taking logarithms:
\[
  2(\tstar)^2 = \frac{1}{4}\ln N
  - \frac{5}{2}\ln\tstar
  - \ln(16\pi) + o(1).
\]

\medskip
\noindent\textit{Leading order.}
Neglecting the subleading terms gives
$2(\tstar)^2 \approx \frac{1}{4}\ln N$,
so $(\tstar)^2 \approx \frac{1}{8}\ln N$ and
$\tstar \approx \sqrt{\ln N / 8}$.

\medskip
\noindent\textit{First correction.}
Substituting
$\tstar \approx \sqrt{\ln N / 8}$ into the
$-\frac{5}{2}\ln\tstar$ term:
\[
  \ln\tstar \approx \frac{1}{2}\ln\ln N
  - \frac{1}{2}\ln 8
  = \frac{1}{2}\ln\ln N - \frac{3}{2}\ln 2.
\]
So
\[
  2(\tstar)^2
  = \frac{1}{4}\ln N
  - \frac{5}{4}\ln\ln N
  + \frac{15}{4}\ln 2
  - \ln(16\pi) + o(1).
\]
The constant simplifies as
$\frac{15}{4}\ln 2 - \ln(16\pi)
= \frac{15}{4}\ln 2 - 4\ln 2 - \ln\pi
= -\frac{1}{4}\ln 2 - \ln\pi$.

\medskip
\noindent\textit{Final expansion.}
Dividing by 2:
\[
  (\tstar)^2
  = \frac{1}{8}\ln N
  - \frac{5}{8}\ln\ln N
  - \frac{1}{8}\ln 2 - \frac{1}{2}\ln\pi
  + o(1).
\]
Since $\tstar > 0$, taking the square root and
expanding:
\[
  \tstar
  = \sqrt{\frac{\ln N}{8}}
  \left(1
  - \frac{5\ln\ln N}{2\ln N}
  + O\!\left(\frac{1}{\ln N}\right)\right),
\]
where the $O(1/\ln N)$ term absorbs the constant
$-\frac{1}{8}\ln 2 - \frac{1}{2}\ln\pi$
(which contributes $O(1/\ln N)$ relative to
the leading $\sqrt{\ln N/8}$).
\end{proof}

\subsection{DPP Void Probability and
  Poisson Approximation}

\begin{lemma}[Poisson approximation]
\label{lem:poisson}
For $t = \tstar + s/(4\,\tstar)$
with fixed $s \in \R$,
\[
  \left|\Prob\!\left(M_N^{(0)}
    \le 1 + \frac{t}{\sqrt{N}}\right)
    - e^{-\Lam(t)}\right|
  = O\!\left(\frac{(\ln N)^{1/4}}{N^{1/4}}\right).
\]
In particular, the error is $o(1)$.
\end{lemma}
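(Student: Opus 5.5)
The plan is to use the DPP void-probability identity from Section 2.1 with the half-plane $H_t := \{z : \Real z > 1 + t/\sqrt{N}\}$:
\[
  \Prob\bigl(M_N^{(0)} \le 1 + t/\sqrt{N}\bigr) = \det(I - K_N|_{H_t}).
\]
Let $\mu_1 \ge \mu_2 \ge \dots$ be the eigenvalues of the restricted kernel $K_t := K_N|_{H_t}$. This operator is a trace-class positive contraction, since it is a compression of a rank-$N$ projection. Hence $\det(I-K_t) = \prod_j (1-\mu_j)$ and $\operatorname{tr} K_t = \sum_j \mu_j = \Lam(t)$. The identity $\operatorname{tr}K_t = \Lam(t)$ is exact, by the one-point function and the definition in Lemma~\ref{lem:count}. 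Taking logarithms,
\[
  \log\det(I-K_t) + \Lam(t) = \sum_j \bigl(\log(1-\mu_j) + \mu_j\bigr).
\]
Each summand lies in $\bigl[-\mu_j^2/(2(1-\mu_j)),\,0\bigr]$. So it suffices to show two things: that $\operatorname{tr} K_t^2 = \sum_j \mu_j^2$ is small, and that $\mu_1$ stays bounded away from $1$. The first gives the second, since $\mu_1^2 \le \operatorname{tr}K_t^2$. The elementary bound $|e^{-a}-e^{-b}| \le |a-b|$ for $a,b\ge 0$ then turns the log-estimate into the stated additive error.

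\textbf{Step 1: $\Lam(t)$ is bounded.} At $t = \tstar + s/(4\tstar)$, Lemma~\ref{lem:count} gives $\Lam(t) = \pi^{-1}N^{1/4}F(t) + o(1)$. The asymptotics of Lemma~\ref{lem:Fprops}(b) give
\[
  F(t)/F(\tstar) = \exp\bigl(-2(t^2-(\tstar)^2)\bigr)\,(1+o(1)) = e^{-s}(1+o(1)),
\]
because $2(t^2 - (\tstar)^2) = s + O(1/\ln N)$. By Proposition~\ref{prop:centering}, $\Lam(t) \to e^{-s}$, so in particular it is $O(1)$.

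\textbf{Step 2: bounding $\operatorname{tr}K_t^2 = \iint_{H_t\times H_t} |K_N(z,w)|^2\,\dd A(z)\,\dd A(w)$.} This is the main obstacle. The Ginibre kernel decays off-diagonal on the microscopic scale: in edge coordinates one has the standard bound
\[
  |K_N(z,w)|^2 \lesssim N^2 e^{-N|z-w|^2}\,\min\bigl(\Phi(-2u_z),\Phi(-2u_w)\bigr)
\]
up to the same $v^2/\sqrt N$ regularisation. This follows from the representation of $K_N$ through the incomplete gamma function evaluated at $Nz\bar w$, together with a saddle-point/Berry--Esseen estimate analogous to Lemma~\ref{lem:edge_density}. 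Integrating out $w$ costs a factor $N^{-1}$ times $N$, which is $O(1)$, and leaves
\[
  \operatorname{tr} K_t^2 \lesssim \int_{H_t} \frac{K_N(z,z)}{N}\,\Phi(-2u_z)\,N\,\dd A(z).
\]
Relative to $\Lam(t)$, the extra factor is $\sup_{u\ge t}\Phi(-2u)\approx \Phi(-2\tstar)$. Using Lemma~\ref{lem:Fprops}(b), $\Phi(-2t) \asymp e^{-2t^2}/t$. Since $e^{-2(\tstar)^2} \asymp N^{-1/4}(\tstar)^{5/2}$ by Proposition~\ref{prop:centering}, this gives $\Phi(-2\tstar) = O\bigl(N^{-1/4}(\ln N)^{3/4}\bigr)$.

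The claimed rate $(\ln N)^{1/4}N^{-1/4}$ should come out of a sharper version of this step. The $u$-integral of $\Phi(-2u)^2$ against the $u$-profile concentrates at width $1/t$ near $u=t$, which gains roughly a factor $t^{-1}\sim(\ln N)^{-1/2}$ over the crude supremum bound. I would first try to reproduce this gain by explicit Gaussian integration in edge coordinates. If that fails, I would accept the slightly weaker power of $\ln N$, which still gives $o(1)$.

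\textbf{Step 3: assembling.} Since $\mu_1 \le (\operatorname{tr}K_t^2)^{1/2} = o(1)$, we get
\[
  \bigl|\log\det(I-K_t)+\Lam(t)\bigr| \le \operatorname{tr}K_t^2 \bigl(1+o(1)\bigr).
\]
Combined with the bound $|e^{-a}-e^{-b}|\le|a-b|$ and Step~1, this yields the lemma. The Berry--Esseen errors of Lemma~\ref{lem:count} are of smaller order $N^{-1/4}(\ln N)^{3/2}\cdot N^{-1/4}$ once multiplied by the appropriate factors. They do not affect the identity $\operatorname{tr} K_t = \Lam(t)$, which is exact.
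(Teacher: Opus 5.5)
Your reduction is the same as the paper's. Both control the Fredholm error by $\Sigma_N(t)=\operatorname{tr}K_t^2$, and your eigenvalue argument (positive contraction, $\log\det(I-K_t)+\Lam(t)=\sum_j(\log(1-\mu_j)+\mu_j)$) is a cleaner, rigorous substitute for the inequality the paper cites. The gap is in Step~2: the pointwise bound you call standard is false in the region that matters. For $z,w\in H_t$ (so $\sqrt N(|z|-1)\ge t\to\infty$) one has
\[
  |K_N(z,w)|^2 = K_N(z,z)\,K_N(w,w)\,\frac{(|z|^2-1)(|w|^2-1)}{|1-z\bar w|^2}\,(1+o(1)).
\]
In edge coordinates this correlation factor is the Lorentzian $4u_zu_w/((u_z+u_w)^2+(v_z-v_w)^2)$. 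So near $u\approx t$ the tangential correlation length is of order $t$, not $O(1)$ with Gaussian decay. Separately, with a $\min$ your inequality would only give $\operatorname{tr}K_t^2\lesssim\Lam(t)$; the extra factor $\Phi(-2u_z)$ in your next display requires a product. The width-$t$ correlation cancels the $1/t$ gain from the normal concentration of the density that you hoped to exploit. Your crude estimate is therefore already sharp: $\operatorname{tr}K_t^2\asymp\Lam(t)\Phi(-2t)\asymp(\ln N)^{3/4}N^{-1/4}$. An exact check: on the rotation-invariant set $\{|z|>r\}$ with $r=1+t/\sqrt N$, the restricted kernel is diagonal in the $\phi_k$ with eigenvalues $p_k(r)=\Prob(\mathrm{Gamma}(k+1)>Nr^2)$. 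Then $\sum_k p_k^2/\sum_k p_k\sim\tfrac12\Phi(-2t)$, with no factor $1/t$.

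Every summand satisfies $\log(1-\mu_j)+\mu_j\le-\mu_j^2/2$, so all terms have the same sign. Hence $e^{-\Lam(t)}-\det(I-K_t)\ge e^{-\Lam(t)}\bigl(1-e^{-\operatorname{tr}K_t^2/2}\bigr)$, and the error in the lemma is genuinely of order $(\ln N)^{3/4}N^{-1/4}$. The stated rate $(\ln N)^{1/4}N^{-1/4}$ cannot be reached by any sharpening of your Step~2. The paper obtains it only from the claimed decorrelation $|K_N(z_1,z_2)|^2/(K_N(z_1,z_1)K_N(z_2,z_2))=e^{-N|z_1-z_2|^2}$, which fails outside the disk.

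What your framework does prove, once the pointwise bound is replaced, is the $o(1)$ conclusion with the sharp rate. Since $H_t\subset\{|w|>r\}$ and the $\phi_k$ are orthogonal on that set, $\int_{H_t}|K_N(z,w)|^2\,\dd A(w)\le\sum_k|\phi_k(z)|^2p_k(r)\le p_{N-1}(r)\,K_N(z,z)$. Hence $\operatorname{tr}K_t^2\le p_{N-1}(r)\Lam(t)$ and $\mu_1\le p_{N-1}(r)$. Berry--Esseen gives $p_{N-1}(r)=\Phi(-2t)(1+o(1))$, because $\Phi(-2t)\gg N^{-1/2}$ at this $t$. This yields $O((\ln N)^{3/4}N^{-1/4})$, which is still $o((\ln N)^{-2})$, so the Gumbel limit and Theorem~\ref{thm:rate} are unaffected.
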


\begin{proof}
The DPP void probability is
$\det(I - K_N|_{\Omega_t})$
where $\Omega_t = \{z : \Real(z) > 1
+ t/\sqrt{N}\}$.
The Fredholm determinant expansion gives
\begin{align*}
\det(I - K_N|_{\Omega_t})
&= \sum_{k=0}^N \frac{(-1)^k}{k!}
\int_{\Omega_t^k}
\det(K_N(z_i,z_j))_{i,j=1}^k
\prod_{i=1}^k \dd A(z_i).
\end{align*}

\medskip
\noindent\textit{Step 1: Poisson replacement.}
The Poisson approximation
$e^{-\Lam(t)}$ replaces each
$\det(K_N(z_i,z_j))$ by
$\prod_i K_N(z_i,z_i)$
\citep{rider2007}.
The error is controlled by the
two-point correlation integral
\[
  \Sigma_N(t) = \int_{\Omega_t}
    \int_{\Omega_t} |K_N(z_1,z_2)|^2\,
    \dd A(z_1)\,\dd A(z_2).
\]
By the Cauchy--Schwarz inequality
for DPP kernels,
$|K_N(z_1,z_2)|^2 \le
K_N(z_1,z_1)K_N(z_2,z_2)$.

\medskip
\noindent\textit{Step 2: Truncation.}
The region $\Omega_t$ extends to infinity
in the $\Imag(z)$ direction,
but the edge density
$\frac{1}{\pi}\Phi(-2u - v^2/\sqrt{N})$
decays as Gaussian in $v$.
We truncate to
$|\Imag(z)| \le L_N N^{1/4}$
with $L_N = (\ln N)^{1/4}$,
which corresponds to
$|v| \le L_N N^{1/4}$ in edge
coordinates $(u,v)$ where
$z = 1 + (u+iv)/\sqrt{N}$.

\emph{Tail bound.}
For $|v| = L_N N^{1/4}$, the edge
density is
$(1/\pi)\Phi(-2u - L_N^2 N^{1/2}/\sqrt{N})
= (1/\pi)\Phi(-2u - L_N^2)$.
Since $L_N^2 = (\ln N)^{1/2}$,
\[
  \Phi(-2u - (\ln N)^{1/2})
  \le \frac{1}{(\ln N)^{1/2}\sqrt{2\pi}}\,
  \exp(-(\ln N)^{1/2}/2)
  = \frac{1}{(\ln N)^{1/2}\sqrt{2\pi}}\,
  N^{-1/2}
\]
(using the standard Gaussian tail bound
$\Phi(-x) \le \frac{1}{x\sqrt{2\pi}}e^{-x^2/2}$
for $x > 0$).
The tail integral over
$|v| > L_N N^{1/4}$ gives
\[
\begin{aligned}
\int_{|v| > L_N N^{1/4}}
  \Phi(-2u - v^2/\sqrt{N})\,\dd v
&\le C\,N^{-1/2}\cdot N^{1/4}\\
&= O(N^{-1/4}),
\end{aligned}
\]
since the $v$-integral of the Gaussian
tail beyond $L_N N^{1/4}$ contributes
$O(N^{1/4})$ (the scale of $v$ at which
the density transitions from $O(1)$
to $O(N^{-1/2})$ is $N^{1/4}$).
This tail of $O(N^{-1/4})$ in the
expected count $\Lam$ is negligible
relative to $\Lam = O(N^{1/4}/\sqrt{\ln N})$,
giving a relative error of
$O(N^{-1/2}) = o(1)$.

\medskip
\noindent\textit{Step 3: Higher-order terms.}
The $k$-th order Fredholm correction is
\begin{align*}
  R_k
  &=
  \frac{(-1)^k}{k!}
  \int_{\Omega_t^k}
  \det\!\bigl(K_N(z_i,z_j)\bigr)_{i,j=1}^k \\
  &\qquad{}\times
  \prod_{i=1}^k \dd A(z_i).
\end{align*}
The Ginibre DPP kernel in edge coordinates
has the explicit form
\[
\begin{aligned}
K_N(z_1,z_2)
&= \frac{N}{\pi}
   e^{-N|z_1-z_2|^2/2} \\
&\quad{}\times
   e^{-N(|z_1|^2+|z_2|^2-2)/2
   + i\,\Imag(N\bar{z}_1z_2)}.
\end{aligned}
\],
so at the spectral edge ($|z_1|, |z_2|
\approx 1$),
$|K_N(z_1,z_2)|^2
= (N/\pi)^2e^{-N|z_1-z_2|^2}$
(up to an edge factor
$e^{-N(|z_1|^2+|z_2|^2-2)}$,
which is $O(1)$ at the edge
since $|z_1|^2+|z_2|^2-2 = O(\sqrt{\ln N/N})$
and $N\sqrt{\ln N/N} = \sqrt{N\ln N}\to\infty$).
By the DPP inequality
$|K_N(z_1,z_2)|^2
\le K_N(z_1,z_1)K_N(z_2,z_2)$,
and the off-diagonal decay
$|K_N(z_1,z_2)|^2/(K_N(z_1,z_1)K_N(z_2,z_2))
= e^{-N|z_1-z_2|^2}$,
the correlation between two points
$z_1, z_2$ in $\Omega_t$ is significant
only when $|z_1-z_2| = O(N^{-1/2})$.
Since the edge density is $O(1)$
and the region has area
$O(L_N N^{1/4})$ in edge coordinates,
the probability that two independent
points fall within $N^{-1/2}$ of each
other is $O(N^{-1/2}) \cdot O(L_N N^{1/4})
= O(L_N N^{-1/4})$.
Each pair contributes a factor
$O(L_N N^{-1/4})$ to the correction,
so the $k$-th order term satisfies
\[
  |R_k| \le \frac{\Lam^k}{k!}
  \left(C L_N N^{-1/4}\right)^{k-1}
  = \frac{\Lam^k}{k!}
  \left(\frac{C L_N}{N^{1/4}}\right)^{k-1}.
\]
The key variance estimate is the
$k=2$ term:
\[
  \Sigma_N(t)
  = \int_{\Omega_t}\int_{\Omega_t}
  |K_N(z_1,z_2)|^2\,\dd A(z_1)\,\dd A(z_2)
  \le C\,\Lam(t)^2\,L_N\,N^{-1/4}.
\]
where the $L_N$ factor comes from the
truncation region width.
This gives
\[
  \left|\det(I - K_N|_{\Omega_t})
  - e^{-\Lam(t)}\right|
  \le C\,\Lam(t)^2\,L_N\,N^{-1/4}
  = O(L_N N^{-1/4}),
\]
where the first bound is the standard
DPP Poisson approximation inequality
$|\det(I-K|_\Omega)-e^{-\mu}|
  \le \Sigma(\Omega)e^{\Sigma(\Omega)}$
(see \citep{soshnikov2000}), with
$\Sigma(\Omega) \le C\,\Lam^2\,L_N\,N^{-1/4}$
and $\Lam(t) = o(1)$ since $\Lam(t) \to 0$,
and the second equality uses
$\Lam(t) = o(1)$
at $t = \tstar + s/(4\tstar)$
(see Proposition~\ref{prop:exp_conv}),
so $\Lam(t)^2 = o(1)$.

\medskip
\noindent\textit{Step 4: Refined rate.}
The truncation at $L_N = (\ln N)^{1/4}$
introduces a logarithmic correction
in the variance estimate:
the effective area of the integration
region is $O(L_N N^{1/4})$
rather than $O(N^{1/4})$,
so the variance bound becomes
$\Sigma_N(t) \le C\,\Lam(t)^2\,L_N\,N^{-1/4}$,
giving the stated rate
$O(L_N N^{-1/4})
= O((\ln N)^{1/4}N^{-1/4})$.
This is $o(1)$, which suffices for the Gumbel
limit, and is $O(1/\ln N)$, which suffices for
the convergence rate in Theorem~\ref{thm:rate}.
\end{proof}

\subsection{Exponential Convergence of
  \texorpdfstring{$\Lam$}{Lambda}}

\begin{proposition}[Exponential convergence]
\label{prop:exp_conv}
For fixed $s \in \R$, setting
$\tau_N = \tstar + s/(4\,\tstar)$,
\[
  \Lam(\tau_N) = e^{-s}\,(1 + o(1)).
\]
\end{proposition}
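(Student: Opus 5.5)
By Lemma~\ref{lem:count}, $\Lam(\tau_N) = \frac{N^{1/4}}{\pi}F(\tau_N) + O(N^{-1/2}(\ln N)^{3/2})$, where the error is $o(1)$. By the definition of $\tstar$, $\frac{N^{1/4}}{\pi}F(\tstar) = 1$. The statement therefore reduces to a purely analytic claim about $F$:
\[
  \frac{F(\tstar + s/(4\tstar))}{F(\tstar)} \longrightarrow e^{-s}
  \qquad (N\to\infty),
\]
for each fixed $s$. Since $\tstar\to\infty$ by Proposition~\ref{prop:centering}, this is a statement about the large-$t$ behaviour of $F$ alone.

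To verify the ratio limit I would use Lemma~\ref{lem:Fprops}(b), $F(t) = \frac{e^{-2t^2}}{16 t^{5/2}}(1+o(1))$ as $t\to+\infty$. Both $\tstar$ and $\tau_N = \tstar + s/(4\tstar)$ tend to infinity, so the two $(1+o(1))$ factors cancel in the ratio up to $1+o(1)$. The Gaussian factor gives
\[
  2\tau_N^2 - 2(\tstar)^2 = s + \frac{s^2}{8(\tstar)^2},
\]
so $e^{-2\tau_N^2+2(\tstar)^2} = e^{-s}\bigl(1+O(1/\ln N)\bigr)$. The prefactor ratio is $(\tstar/\tau_N)^{5/2} = \bigl(1+s/(4(\tstar)^2)\bigr)^{-5/2} = 1+O(1/\ln N)$. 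Combining these gives $\frac{N^{1/4}}{\pi}F(\tau_N) = e^{-s}(1+o(1))$. Adding back the additive error from Lemma~\ref{lem:count}, which is $o(1)$ and hence $o(e^{-s})$ for fixed $s$, yields the claim.

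The only delicate point is that Lemma~\ref{lem:Fprops}(b) is an asymptotic equivalence, so one must make sure it is applied along two sequences that both diverge. This is harmless, since the $o(1)$ there depends only on $t\to\infty$. If a rate is wanted later for Theorem~\ref{thm:rate}, I would replace this route by the exact identity
\[
  \ln\frac{F(\tau_N)}{F(\tstar)} = -\int_{\tstar}^{\tau_N}\frac{B(t)}{F(t)}\,\dd t,
\]
and use a refined expansion $B(t)/F(t) = 4t + \frac{5}{2t} + O(t^{-3})$. That expansion would come from Laplace's method with one more term in the Watson expansion behind Lemma~\ref{lem:Fprops}(b). For the present $o(1)$ statement, the leading asymptotic suffices.
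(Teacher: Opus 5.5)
Your argument is correct and is essentially the paper's proof. Both apply Lemma~\ref{lem:Fprops}(b) at $\tau_N$ and at $\tstar$ (the latter through the defining equation $F(\tstar)=\pi N^{-1/4}$), expand $2\tau_N^2-2(\tstar)^2=s+s^2/(8(\tstar)^2)$ and $(\tstar/\tau_N)^{5/2}=1+O(1/\ln N)$, and conclude $\frac{N^{1/4}}{\pi}F(\tau_N)=e^{-s}(1+o(1))$; your version is slightly more careful in explicitly adding back the $o(1)$ error from Lemma~\ref{lem:count}, which the paper drops when it writes $\Lam(\tau_N)=\frac{N^{1/4}}{\pi}F(\tau_N)$.
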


\begin{proof}
Since $\tau_N \to \infty$,
Lemma~\ref{lem:Fprops}(b) applies:
$F(\tau_N) \sim
e^{-2\tau_N^2}/(16\,\tau_N^{5/2})$.

\medskip
\noindent\textit{Step 1: Expand the exponent.}

\[
  \tau_N^2 = \left(\tstar
    + \frac{s}{4\tstar}\right)^2
  = (\tstar)^2 + \frac{s}{2}
    + \frac{s^2}{16(\tstar)^2}.
\]
Therefore
$2\tau_N^2 = 2(\tstar)^2 + s
  + \frac{s^2}{8(\tstar)^2}$,
and
\[
  e^{-2\tau_N^2}
  = e^{-2(\tstar)^2}\,e^{-s}\,
    \left(1 - \frac{s^2}{8(\tstar)^2}
    + \ldots\right).
\]
Since $(\tstar)^2 = \tfrac18\ln N + O(\ln\ln N)$,
the factor $s^2/(8(\tstar)^2) = O(1/\ln N) = o(1)$,
so it is absorbed into the $(1+o(1))$ term.

\medskip
\noindent\textit{Step 2: Expand the denominator.}
\[
  \tau_N^{5/2} = (\tstar)^{5/2}
    \left(1 + \frac{s}{4(\tstar)^2}\right)^{5/2}
  = (\tstar)^{5/2}\left(1
    + \frac{5s}{8(\tstar)^2}
    + \ldots\right).
\]

\medskip
\noindent\textit{Step 3: Combine.}
By the defining equation,
$e^{-2(\tstar)^2}/(16\,(\tstar)^{5/2})
  = \pi\, N^{-1/4}\,(1+o(1))$.
Therefore
\[
  F(\tau_N)
  = \pi\,N^{-1/4}\,e^{-s}\,(1+o(1)),
\]
and
$\Lam(\tau_N) = \frac{N^{1/4}}{\pi}\,
  F(\tau_N) = e^{-s}\,(1+o(1))$.
\end{proof}

\subsection{The Gumbel Limit}

\begin{theorem}[Gumbel limit
  for pure Ginibre, {\citet{rider2007}}]
\label{thm:gumbel_pure}
For
$S_N^{(0)} = 4\,\tstar\bigl(\sqrt{N}\,
  (M_N^{(0)} - 1) - \tstar\bigr)$,
\[
  S_N^{(0)} \Conv G,
  \qquad F_G(x) = e^{-e^{-x}}.
\]
\end{theorem}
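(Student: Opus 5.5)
The argument combines the three ingredients already in place: the void-probability identity, the Poisson approximation (Lemma~\ref{lem:poisson}), and the exponential convergence of the mean count (Proposition~\ref{prop:exp_conv}).

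\textbf{Reduction to a fixed threshold.} Fix $s\in\R$ and set $\tau_N = \tstar + s/(4\tstar)$. Unwinding the definition of $S_N^{(0)}$, the event $\{S_N^{(0)}\le s\}$ is exactly
\[
  \Bigl\{M_N^{(0)} \le 1 + \frac{\tau_N}{\sqrt N}\Bigr\}.
\]
This is the event that the DPP has no eigenvalue in $\Omega_{\tau_N}=\{\Real z > 1+\tau_N/\sqrt N\}$. Its probability is therefore $\det(I-K_N|_{\Omega_{\tau_N}})$.

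\textbf{Poisson approximation.} Lemma~\ref{lem:poisson} applies at precisely this $t=\tau_N$ and gives
\[
  \Prob(S_N^{(0)}\le s) = e^{-\Lam(\tau_N)} + O\bigl((\ln N)^{1/4}N^{-1/4}\bigr).
\]

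\textbf{Limit of the mean count.} Proposition~\ref{prop:exp_conv} gives $\Lam(\tau_N)=e^{-s}(1+o(1))$. Hence $e^{-\Lam(\tau_N)}\to e^{-e^{-s}}$ by continuity of the exponential. Implicit in this step is that $\Lam(t)$, the expected count, coincides with $\frac{N^{1/4}}{\pi}F(t)$ up to the $o(1)$ error of Lemma~\ref{lem:count}.

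Combining the three steps,
\[
  \Prob(S_N^{(0)}\le s)\to e^{-e^{-s}}
\]
for every fixed $s$. Since the Gumbel CDF is continuous, pointwise convergence at every $s$ is convergence in distribution. (Uniformity on compacts then follows from monotonicity by Pólya's theorem, although it is not needed here.)

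\textbf{Main obstacle.} The delicate point is Lemma~\ref{lem:poisson}: controlling the gap between the Fredholm determinant and $e^{-\Lam}$. This requires the two-point integral $\Sigma_N$ to be small compared with $1$, and the $v$-direction is unbounded. I will rely on it as stated rather than reprove it. As an independent check, I would note that Rider's Poisson point process result for the edge already yields the same conclusion. Since Theorem~\ref{thm:gumbel_pure} is attributed to \citet{rider2007}, the proof can also be closed by citing that result directly. The argument above shows its consistency with the centering $F(\tstar)=\pi N^{-1/4}$.
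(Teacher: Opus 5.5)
Your proposal is correct and follows the paper's proof essentially step for step. You rewrite $\{S_N^{(0)}\le s\}$ as the void event for $\Omega_{\tau_N}$ with $\tau_N=\tstar+s/(4\tstar)$. You then use Lemma~\ref{lem:poisson} to replace the void probability by $e^{-\Lam(\tau_N)}$ up to $o(1)$, and finish with Proposition~\ref{prop:exp_conv} and continuity of the exponential, exactly as the paper does (including taking Lemma~\ref{lem:poisson} as given and noting that the result can alternatively be cited).
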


\begin{proof}[Proof via DPP/Poisson]
The CDF is
$\Prob(S_N^{(0)} \le s)
  = \Prob\bigl(M_N^{(0)} \le 1
  + (\tstar + s/(4\tstar))/\sqrt{N}\bigr)$.
By Lemma~\ref{lem:poisson} and
Proposition~\ref{prop:exp_conv}:
\begin{align*}
  \Prob(S_N^{(0)} \le s)
  &= \exp\bigl(-\Lam(\tau_N)\bigr)
    + O\!\left(\frac{(\ln N)^{1/4}}{N^{1/4}}\right) \\
  &= \exp\bigl(-e^{-s}\,(1+o(1))\bigr)
    + o(1) \\
  &= e^{-e^{-s}} + o(1).
\end{align*}
The last step uses the continuity of
$\exp(-\cdot)$.
The Gumbel limit was established
by \citet{rider2007};
the DPP/Poisson proof given here is an
independent verification.
\end{proof}

\section{Subcritical Spike Irrelevance}
\label{sec:subcritical}

In this section we prove
Theorem~\ref{thm:main}: for any fixed
$\sigma \in [0,1)$, the spiked model
$A_N = X_N + \sigma\, u u^*$
has the same Gumbel limit as the pure
Ginibre ensemble.
By unitary invariance of the Ginibre ensemble
($X_N \stackrel{\mathrm{d}}{=} U X_N U^*$ for any Haar
unitary $U$, hence $U A_N U^* \stackrel{\mathrm{d}}{=}
X_N + \sigma\, (Uu)(Uu)^*$ has the same
eigenvalue distribution as $A_N$),
we may take $u = e_1$ without loss of
generality, and work with
$A_N = X_N + \sigma\, e_1 e_1^*$.

\subsection{Outlier Localization}

\begin{lemma}[Spike eigenvalue location]
\label{lem:spike_loc}
For $\sigma < 1$ and any $\delta > 0$,
\[
  \Prob\bigl(|\lambda_{\mathrm{spike}}|
    > \sigma + \delta\bigr) \to 0
  \quad \text{as } N \to \infty,
\]
where $\lambda_{\mathrm{spike}}$ is the
eigenvalue of $A_N$ associated with the
spike direction.
In particular, for $\sigma \le 1 - \epsilon$
with fixed $\epsilon > 0$,
\[
  \Real(\lambda_{\mathrm{spike}})
  \le 1 - \frac{\epsilon}{2}
\]
with probability tending to 1.
\end{lemma}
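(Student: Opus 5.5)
The plan is to make "the eigenvalue associated with the spike direction" precise through the secular equation, and then to localize its roots by comparison with a deterministic equation. Since $A_N = X_N + \sigma e_1e_1^*$, for every $z\notin\operatorname{spec}(X_N)$ we have
\[
  \det(z - A_N) = \det(z - X_N)\,\bigl(1 - \sigma\, g_N(z)\bigr),
  \qquad g_N(z) := e_1^*(z - X_N)^{-1}e_1 .
\]
So an eigenvalue of $A_N$ that is not an eigenvalue of $X_N$ is exactly a zero of $h_N(z) := 1 - \sigma g_N(z)$. The deterministic equivalent is $g(z) = 1/z$, and $h(z) = 1 - \sigma/z$ has its unique zero at $z=\sigma$. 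I define $\lambda_{\mathrm{spike}}$ as the zero of $h_N$ tracked from this deterministic root. The lemma then amounts to showing that, with probability tending to one, $h_N$ has no zero in $\{|z| > \sigma + \delta\}$ outside the bulk. The \emph{in particular} statement follows by taking $\delta = \epsilon/2$, since then $\sigma+\delta \le 1-\epsilon/2$.

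\textbf{Step 1: resolvent expansion off the disk.} For $|z| \ge 1 + \eta$ with $\eta>0$ fixed, I expand
\[
  g_N(z) = \sum_{k\ge 0} z^{-k-1}\, e_1^* X_N^k e_1 .
\]
The operator norm $\|X_N\|\to 2$ is too large for a naive Neumann bound. Instead I would use the standard Ginibre moment bounds $\E\|X_N^k\| \le C\sqrt{k+1}$ for $k \le (\ln N)^2$, which come from the moment method on products or from $\E\operatorname{tr}(X_N^k X_N^{*k})$. The tail $k > (\ln N)^2$ is handled by Gelfand's formula together with $\rho(X_N)\le 1+o_{\Prob}(1)$. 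Both give a geometrically convergent series with high probability, uniformly on $|z|\ge 1+\eta$.

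\textbf{Step 2: concentration of the coefficients.} For each $k\ge1$, $\E[e_1^*X_N^ke_1]=0$ by rotation invariance ($X_N \mapsto e^{i\theta}X_N$), and a Wick count gives $\Var(e_1^*X_N^ke_1) = O(C^k/N)$. Summing against $|z|^{-k-1}$ yields $\sup_{|z|\ge 1+\eta}|g_N(z) - 1/z| \to 0$ in probability. A net over a compact piece of the contour plus a Lipschitz bound on $g_N$ makes this uniform; the region of large $|z|$ is trivial.

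\textbf{Step 3: Rouché.} On $\{|z|\ge 1+\eta\}$ we have $|h(z)| \ge 1-\sigma/(1+\eta) > 0$. Step 2 gives $|h_N-h| < |h|$ there with high probability, so $h_N$ has no zeros in that region, exactly as $h$ has none. Applying Rouché on a small circle around $\sigma$ (where the expansion of Step 1 is no longer available) would instead use the isotropic local law, which the paper invokes later. That would place the tracked root within $\delta$ of $\sigma$, giving $|\lambda_{\mathrm{spike}}| \le \sigma + \delta$.

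\textbf{Main obstacle.} The hard point is precisely this last use of Rouché inside the unit disk. There $z\mapsto g_N(z)$ has poles at the Ginibre eigenvalues, which fill the disk, so "the spike eigenvalue" is not canonically separated from the bulk. My first attempt would be to replace $g_N$ by a Hermitized, regularized version, namely $e_1^*(z-X_N)^*\bigl((z-X_N)(z-X_N)^*+\gamma^2\bigr)^{-1}e_1$ with a mesoscopic $\gamma$, and to prove the isotropic local law for it near $z=\sigma$. If that fails, I would retreat to the weaker but sufficient statement that the spike creates no eigenvalue outside $\{|z|\le 1+\eta\}$, which is what Theorem~\ref{thm:main} actually needs, together with an eigenvector-overlap argument showing that the eigenvalue with largest $e_1$-overlap has real part at most $1-\epsilon/2$.
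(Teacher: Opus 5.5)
There is a genuine gap, and it sits where you flagged your main obstacle. Your Steps 1--2 and the first half of Step 3 prove only the exterior statement: with probability tending to one, $A_N$ has no eigenvalue in $\{|z|\ge 1+\eta\}$. This is the first half of the paper's proof, reached through moments and concentration instead of the isotropic local law. That is a fine alternative once the tail is made quantitative. Gelfand's formula gives no rate in $N$, but the remainder $z^{-K}X_N^K(z-X_N)^{-1}$, with $\|(z-X_N)^{-1}\|\le C_\eta$ with high probability, handles that. Also, a variance bound $C^k/N$ is summable against $|z|^{-2k}$ only if $C<(1+\eta)^2$.

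Both claims of the lemma, however, live strictly inside the unit disk, and your reduction to a zero-free region fails there. Almost surely $A_N$ and $X_N$ share no eigenvalue, so every eigenvalue of $A_N$ is a zero of $h_N$. Hence $h_N$ has of order $N(1-(\sigma+\delta)^2)$ zeros in the annulus $\sigma+\delta<|z|<1$. No zero-free statement about $h_N$ can give the lemma; one of these zeros has to be singled out.

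Your device for singling it out is the zero tracked from the root $z=\sigma$ of $1-\sigma/z$, via Rouch\'e on a small circle. This cannot work, because for $|z|<1$ the function $1/z$ is not the deterministic equivalent of $g_N$:
\begin{itemize}
\item by conjugation invariance, $\E g_N(z)=\E\frac1N\operatorname{tr}(z-X_N)^{-1}\approx\bar z$ there, and $g_N$ does not concentrate;
\item $g_N$ has of order $N\delta^2$ poles inside the $\delta$-disk around $\sigma$;
\item the argument principle only counts zeros minus poles, and both are of order $N\delta^2$.
\end{itemize}
So there is no root to track, and $\lambda_{\mathrm{spike}}$ is never defined. Your Hermitized substitute does not rescue this. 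It is not holomorphic, so Rouch\'e cannot use it to locate eigenvalues. Its limit inside the disk is again $\bar z$, so $1-\sigma\bar z$ vanishes only at $1/\sigma$, outside the disk, as the paper itself observes.

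The paper does not supply the missing idea either. After the exterior bound it simply asserts that the spike eigenvalue stays ``buried in the bulk near $|z|\le\sigma+o(1)$'', citing an $O(N^{-1/2})$ shift from delocalization. That is a first-order heuristic, and it cannot control an $O(1)$ perturbation of a matrix whose bulk eigenvalue condition numbers are of order $\sqrt N$. What is actually established, by you or by the paper, is the exterior statement, and the \emph{in particular} clause does not follow from it.

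Your fallback also overreaches in two ways. First, the exterior statement is not what Theorem~\ref{thm:main} needs. Proposition~\ref{prop:domination} uses $\Real(\lambda_{\mathrm{spike}})\le 1-\epsilon/2$ to beat the bulk edge at $1+c\sqrt{\ln N/N}$, and no bound of the form $|z|\le 1+\eta$ with fixed $\eta$ can do that. That comparison has to be made at the edge scale, which is the role the paper assigns to Lemma~\ref{lem:decoupling}. Second, the overlap claim is unsupported. For an eigenpair $(\lambda,v)$ of $A_N$ one has $v=\sigma(e_1^*v)(\lambda-X_N)^{-1}e_1$, hence $|e_1^*v|/\|v\|=1/(\sigma\|(\lambda-X_N)^{-1}e_1\|)$. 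This suggests the largest overlaps occur toward the spectral edge, where $\lambda-X_N$ is better conditioned, rather than near $\sigma$.

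A provable version of the lemma must start from an explicit definition of $\lambda_{\mathrm{spike}}$. Without one, the honest content is the exterior statement.
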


\begin{proof}
The secular equation for the spiked model
$A_N = X_N + \sigma\, E_{11}$ is
\[
  g(z) = 1 - \sigma\, (zI - X_N)^{-1}_{11} = 0,
\]
which follows from the matrix determinant lemma
(Sylvester identity):
\[
  \det(zI - A_N)
  = \det(zI - X_N)\,\det g(z).
\]

For $|z| > 1 + \delta$ with $\delta > 0$,
the spectral radius bound
$\max_i |\lambda_i(X_N)| \to 1$ a.s.\
\citep{rider2003} ensures that $z$ is
outside the spectrum of $X_N$ with
probability $\to 1$.
However, for the non-normal Ginibre
ensemble, the operator norm
$(zI - X_N)^{-1}$ is not
bounded by $1/\dist(z,\sigma(X_N))$, so we
proceed directly via the isotropic local law.
For $|z| \ge 1 + \epsilon$
with any fixed $\epsilon > 0$, the
isotropic local law
\citep{bourgade2018} gives
\[
  |(zI - X_N)^{-1}_{11} - 1/z|
  \le C/(\sqrt{N}\,\epsilon^{1/2}),
\]
so
\[
  |(zI - X_N)^{-1}_{11}|
  \le 1/(|z|-1) + C/(\sqrt{N}\,\epsilon^{1/2}).
\]
For $|z| \ge 1 + \epsilon$ and any fixed
$\sigma < 1$, the deterministic
part $1/z$ already gives
$|1/z| \le 1/(1+\epsilon) < 1$, so the
secular equation $g(z) = 1 - \sigma/z + O((\sqrt{N}\epsilon)^{-1})$
is bounded below by
$1 - \sigma/(1+\epsilon) - O((\sqrt{N}\epsilon)^{-1})$.
With $\epsilon = (1-\sigma)/2$,
\[
  1 - \sigma/(1+\epsilon)
  = (1+\epsilon-\sigma)/(1+\epsilon)
  = \bigl((1-\sigma)+\epsilon\bigr)/(1+\epsilon)
  = \frac{3(1-\sigma)/2}{(3-\sigma)/2}
  = \frac{3(1-\sigma)}{3-\sigma} > 0
\]
for all $\sigma < 1$, and for $N$ large the
$O((\sqrt{N}\epsilon)^{-1})$ term is
absorbed. Thus for any fixed $\sigma < 1$ and
$\epsilon = (1-\sigma)/2$,
$|g(z)| > 0$ on $|z| > 1 + \epsilon$ for
$N$ sufficiently large.
Thus no eigenvalue of $A_N$ escapes
beyond $1 + \epsilon$.

For the spike-associated eigenvalue near
$z \approx \sigma$ (inside the disk),
we use the isotropic local law for the
Ginibre ensemble inside the bulk
\citep{bourgade2018}: for $|z| < 1 - \epsilon$
with any fixed $\epsilon > 0$,
\[
  (zI - X_N)^{-1}_{11} \to \overline{z}
  + O(N^{-1/2})
\]
(the deterministic equivalent for the
diagonal resolvent entry of the complex
Ginibre ensemble is $m(z) = \overline{z}$,
the complex conjugate, not $1/z$).
The secular equation $1 - \sigma\,\overline{z} = 0$
has the formal solution $z = 1/\overline{\sigma}$,
which satisfies $|z| = 1/|\sigma| > 1$ for
$\sigma < 1$; this lies outside the
unit disk and is therefore not a valid
bulk solution. Consequently, for
$\sigma < 1$, the spike does not produce
a genuine outlier; the eigenvalue
associated with the spike direction
remains buried in the bulk near
$|z| \le \sigma + o(1)$
(the perturbation $\sigma\, e_1 e_1^*$
shifts nearby bulk eigenvalues by at most
$O(N^{-1/2})$ by eigenvector delocalization
\citep{bourgade2018}),
so $|\lambda_{\mathrm{spike}}| \le \sigma + \delta$
for any $\delta > 0$ with probability
$\to 1$.
\end{proof}

\subsection{Decoupling of Outlier and Bulk}

\begin{lemma}[Subcritical bulk stability]
\label{lem:decoupling}
For fixed $\sigma < 1$, the eigenvalue process
of $A_N = X_N + \sigma\, E_{11}$ near the
spectral edge $\Real(\lambda) = 1$ has the same
limiting extreme value distribution as the
unspiked ensemble $G_N = X_N$.
More precisely, for any $t$ with
$\sqrt{N}\,(\Real(t) - 1) = O(\tstar)$,
\[
  \Prob\bigl(\text{no eigenvalue of } A_N
    \text{ in } \Omega_t\bigr)
  = \Prob\bigl(\text{no eigenvalue of } G_N
    \text{ in } \Omega_t\bigr)
  + O(N^{-c}),
\]
where $\Omega_t = \{z : \Real(z) > t\}$
and $c > 0$ is a fixed constant.
\end{lemma}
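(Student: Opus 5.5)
We compare the two void probabilities through the secular function. Set $\Omega = \Omega_t$ with $t = 1 + \tau/\sqrt{N}$, $\tau = O(\tstar)$, and let $h_N(z) := 1 - \sigma\,(zI - X_N)^{-1}_{11}$. By the matrix determinant lemma used in Lemma~\ref{lem:spike_loc}, $\det(zI - A_N) = \det(zI - X_N)\, h_N(z)$, so the eigenvalues of $A_N$ in $\Omega$ are the zeros of the meromorphic function $h_N$ there. Its poles in $\Omega$ are exactly the eigenvalues of $X_N$ in $\Omega$.

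\textbf{Step 1.} Reduce to a bounded window. By Lemma~\ref{lem:spike_loc}, with probability $1 - O(N^{-c})$ (using the high-probability form of the isotropic local law of \citet{bourgade2018}), $A_N$ has no eigenvalue in $\{|z| > 1 + \epsilon\}$. By \citet{rider2003}, $X_N$ has none either. Both events can therefore be intersected with the compact region $\Omega' = \Omega \cap \{|z| \le 1 + \epsilon\}$. In edge coordinates this region has height $O(N^{1/4})$ up to logarithms, as in Step~2 of Lemma~\ref{lem:poisson}.

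\textbf{Step 2.} Show that $h_N$ stays near a nonvanishing deterministic function on the edge window. Near $|z| = 1$, the deterministic equivalent $m(z)$ of $(zI - X_N)^{-1}_{11}$ interpolates between $\bar z$ inside and $1/z$ outside. In particular $|m(z)| \le 1 + o(1)$ uniformly on $\Omega'$, so $|1 - \sigma m(z)| \ge (1 - \sigma)/2$. We would invoke the isotropic local law at the edge, with error $N^{-c}$, on a net of mesh $N^{-2}$ in $\Omega'$ where $z$ avoids the spectrum. We then extend to the whole region by Lipschitz continuity of the resolvent off a small neighborhood of the eigenvalues. The conclusion is that $|h_N| \ge (1 - \sigma)/4$ on $\Omega'$ minus small disks of radius $N^{-C}$ around the eigenvalues of $X_N$.

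\textbf{Step 3.} Compare zeros and poles by the argument principle. On the boundary of each such small disk, and on $\partial\Omega'$ after a generic perturbation of the boundary line avoiding eigenvalues, $h_N$ has winding number matching its zero-minus-pole count inside. Near an isolated eigenvalue $\lambda$ of $X_N$, write $h_N(z) = 1 - \sigma r/(z - \lambda) + (\text{regular})$. This has exactly one zero, at distance $O(|\sigma r|)$ from $\lambda$. The residue $r = \langle e_1, \text{right eigvec}\rangle\langle \text{left eigvec}, e_1\rangle$ is $O(N^{-1+c'})$ by delocalization, even accounting for the non-normal overlap growth. So each eigenvalue of $X_N$ in the window is matched with an eigenvalue of $A_N$ within distance $N^{-1/2-c''}$, and $h_N$ creates no unmatched zeros. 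Hence the counts $\#(\sigma(A_N) \cap \Omega')$ and $\#(\sigma(X_N) \cap \Omega')$ agree, except on the event that some eigenvalue of $X_N$ lies within $N^{-1/2-c''}$ of the boundary line $\Real z = t$.

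\textbf{Step 4.} Bound the boundary event. By Lemma~\ref{lem:edge_density} and Lemma~\ref{lem:count}, the expected number of eigenvalues of $X_N$ in a strip of width $N^{-c''}$ (in $u$) near $u = \tau$ is $O(N^{-c''} |B(\tau)| N^{1/4})$. Since $\Lambda(\tau) = O(1)$ at $\tau \approx \tstar$, and $B/F \sim 4\tau$ by Lemma~\ref{lem:Fprops}, this is $O(N^{-c''}\sqrt{\ln N})$. Collecting the error probabilities from Steps 1--4 gives the $O(N^{-c})$ difference in void probabilities.

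\textbf{Main obstacle.} The hardest part is Step 2–3 at the spectral edge itself. The isotropic local law must hold uniformly down to the edge scale $N^{-1/2}$ on a region of tangential length $N^{-1/4}$. In addition, non-normality makes the eigenvector overlaps, and hence $r$, larger than in the Hermitian case. If edge overlaps are only bounded by $N^{-1/2}$, the displacement is of order $N^{-1/2}$, comparable to the Gumbel scale divided by $\tstar$. Step 4 would then lose the polynomial rate, yielding only an $o(1)$ bound, which still suffices for Theorem~\ref{thm:main}. As a fallback, I would replace the pointwise matching by a Rouché argument comparing $\det(zI - A_N)$ with $\det(zI - X_N)$ on $\partial\Omega'$ directly.
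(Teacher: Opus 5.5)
Your proposal follows essentially the paper's argument: the factorization $\det(zI-A_N)=\det(zI-X_N)\,(1-\sigma G_{11}(z))$, the isotropic local law plus a net to keep the secular factor near $1-\sigma/z$ off small disks around the eigenvalues of $X_N$ (what the paper's Steps~1--2 implicitly need, with the disk radius chosen between $|\sigma a_kb_k|$ and $N^{-1/2-c''}$), a Rouch\'e/argument-principle matching of each eigenvalue of $X_N$ in $\Omega_t$ with exactly one of $A_N$, and a boundary-strip bound on the void probabilities. One correction to your fallback remark: a displacement of order $N^{-1/2}$ is $O(1)$ in the edge variable $u$, i.e.\ about $4\tstar$ Gumbel widths, so Step~4 would fail outright rather than merely lose its rate; what keeps the main line valid is that edge eigenvalues have diagonal overlaps $\|u_k\|^2\|v_k\|^2$ (with $u_k^*v_k=1$) of order $\sqrt N$, so $|a_kb_k|$ is of order $N^{-3/4}$, well below the required $o\bigl(N^{-1/2}/\sqrt{\ln N}\bigr)$.
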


\begin{proof}
We prove the void probability comparison via
four steps: (i) the secular factor is
bounded away from zero at the edge,
(ii) the secular factor has no zeros at the edge,
(iii) the eigenvalue point processes of
$A_N$ and $G_N$ in $\Omega_t$ are in
one-to-one correspondence with $O(N^{-1})$
displacement, and (iv) the Gumbel limit for
$G_N$ transfers to $A_N$.

\medskip
\noindent\textit{Step 1: Secular factor at the edge.}
The secular equation for $A_N$ gives
\[
  \det(zI - A_N)
  = \det(zI - X_N)\bigl(1 - \sigma\,
    e_1^*(zI - X_N)^{-1} e_1\bigr).
\]
We need the resolvent entry
$e_1^*(zI - X_N)^{-1} e_1$ at the spectral
edge $\Real(z) = 1 + O(\sqrt{\ln N / N})$.
The isotropic local law
for the Ginibre resolvent
(see \citep{bourgade2018} and references
therein) states
that for $z$ at distance $\eta$
from the support of the circular law,

\[
  |e_1^*(zI - X_N)^{-1} e_1 - m(z)|
  \le \frac{C}{\sqrt{N}\,\eta^{1/2}}
\]
with probability $\ge 1 - N^{-D}$,
where $m(z) = 1/z$ is the Stieltjes transform
of the circular law.

\emph{Uniformity:} For $|z| > 2$,
$|e_1^*(zI - X_N)^{-1} e_1| \le 1/(|z|-1)
\le 1$ deterministically, so $|S_N(z)|$
$\ge 1 - \sigma > 0$ without the local law.
For $|z| \le 2$, the local law applies on
a compact set; a net argument extends the
pointwise bound to all $z$ in the compact
edge region. The function
$z \mapsto e_1^*(zI - X_N)^{-1} e_1$
is $L$-Lipschitz with $L = O(1/\eta)
= O((N/\ln N)^{1/2})$ by the same Cauchy
integral argument as in the proof of
Theorem~\ref{thm:upper} (using analyticity
on a disk of radius $\eta/2$ and the
isotropic local law on the boundary circle).
With mesh $\delta = N^{-1}$, giving
$O(N^2)$ grid points, $\eta\,\delta =
O(N^{-1/2}/(\ln N)^{1/4}) = o(N^{-1/4}/(\ln N)^{1/4})$,
and the union bound $O(N^2) \cdot N^{-D} = o(1)$ for $D > 2$
extends the pointwise bound to all $z$
in the edge region.
At the edge, $\eta = O(\sqrt{\ln N / N})$,
so the fluctuation is
$O(N^{-1/2}/(\ln N)^{1/4})
= O(N^{-1/4}/(\ln N)^{1/4})$,
which is $o(1)$. Therefore
\[
  e_1^*(zI - X_N)^{-1} e_1
  = \frac{1}{z} + o\!\left(
    N^{-1/4}/(\ln N)^{1/4}\right)
\]
uniformly for $z$ in the edge region
$\Real(z) = 1 + O(\sqrt{\ln N / N})$,
and the secular factor satisfies
\[
  1 - \sigma\, e_1^*(zI - X_N)^{-1} e_1
  = 1 - \frac{\sigma}{z}
    + o\!\left(\sigma\,N^{-1/4}/(\ln N)^{1/4}\right).
\]
For $\sigma < 1$ and $|z| \ge 1$,
$|1 - \sigma/z| \ge 1 - \sigma > 0$,
so the secular factor is bounded away from
zero and agrees with $1-\sigma/z$ up to
$o(N^{-1/4}/(\ln N)^{1/4})$.

\medskip
\noindent\textit{Step 2: Secular factor has no zeros
at the edge.}
The secular equation gives the exact
factorization
\begin{equation}
\label{eq:secular}
  \det(zI - A_N)
  = \det(zI - X_N)\,\bigl(1 - \sigma\,
    e_1^*(zI - X_N)^{-1} e_1\bigr)
  =: \det(zI - X_N)\,S_N(z).
\end{equation}
The zeros of $\det(zI - A_N)$ are the
eigenvalues of $A_N$; the zeros of
$\det(zI - X_N)$ are the eigenvalues of
$G_N = X_N$; and the zeros of $S_N(z)$
are the eigenvalues of $A_N$ that are
not eigenvalues of $X_N$ (the spike
eigenvalues).
By Step~1, for $z$ in the edge region
$\Omega_t = \{z : \Real(z) > t\}$ with
$\sqrt{N}\,(\Real(t) - 1) = O(\tstar)$,
\[
  |S_N(z)| \ge |1 - \sigma/z|
  - o(N^{-1/4}/(\ln N)^{1/4})
  \ge (1 - \sigma) - o(1) > 0
\]
for $N$ sufficiently large,
since $|z| \ge |\Real(z)| \ge t \to 1$
and $|1 - \sigma/z| \ge 1 - \sigma/|z|
\ge 1-\sigma > 0$.
Therefore $S_N$ has \emph{no zeros} in
$\Omega_t$ with probability
$\ge 1 - N^{-D}$.

\medskip
\noindent\textit{Step 3: Eigenvalue correspondence.}
The secular factorization
$\det(zI - A_N) = \det(zI - X_N)\,S_N(z)$
shows that, for $z$ not an eigenvalue of
$X_N$,
\[
  z \text{ is an eigenvalue of } A_N
  \iff S_N(z) = 0.
\]
Since $S_N$ has no zeros in $\Omega_t$
(Step~2), the eigenvalues of $A_N$ in
$\Omega_t$ are in one-to-one correspondence
with the eigenvalues of $X_N$ in $\Omega_t$,
perturbed by the regular part of $S_N$.

We now show that the perturbation is
negligible for the void probability.
The function $S_N(z)$ is meromorphic with
simple poles at the eigenvalues
$\{\mu_k\}$ of $X_N$, with residue
$-\sigma\, a_k\, b_k$ at $z = \mu_k$,
where $a_k = e_1^* v_k$ and
$b_k = u_k^* e_1$ are the right/left
eigenvector overlaps with the spike
direction $e_1$.
The eigenvalues of $X_N$ are simple
almost surely (the joint density of
eigenvalues is a smooth function on the
configuration space with distinct
eigenvalues), so $\det(zI - X_N)$ has
simple zeros at each $\mu_k$ with
$\det'(\mu_k) = \prod_{j\ne k}(\mu_k-\mu_j)$.
Near $z = \mu_k$, the product
$\det(zI - X_N)\,S_N(z)$ has a removable
singularity with value
\[
  \lim_{z \to \mu_k}
  \det(zI - X_N)\,S_N(z)
  = \det'(\mu_k)\,(-\sigma\,a_k\,b_k)
  \ne 0
\]
unless $a_k,b_k = 0$, which occurs with
probability zero (the overlaps
$a_k = e_1^*v_k$, $b_k = u_k^*e_1$ are
continuous random variables with a
joint density in $\mathbb{C}^2$, so
$\{(a_k,b_k):a_k b_k=0\}$ is a
measure-zero algebraic variety).
Since the product is regular and
nonzero at $\mu_k$, the eigenvalue of
$A_N$ near $\mu_k$ is displaced by
$o(1)$ from $\mu_k$; by Rouché's theorem,
$\det(zI - A_N)$ and $\det(zI - X_N)$
have the same number of zeros in a
small disk $|z - \mu_k| < \epsilon_N$
for any $\epsilon_N \gg N^{-1}$.
To verify the Rouché condition,
recall from Eq.~\eqref{eq:secular}
that $\det(zI - A_N) = \det(zI - X_N)\,S_N(z)$
where $S_N(z) = 1 - \sigma\,G_{11}(z)$ and
$G_{11}(z) = e_1^*(zI - X_N)^{-1}e_1$.
The Rouché condition requires
$|S_N(z) - 1| < 1$ on the circle
$|z - \mu_k| = \epsilon_N$, i.e.,
$|\sigma\,G_{11}(z)| < 1$.
Near $z = \mu_k$, the resolvent has a
simple pole:
\[
  G_{11}(z) = \frac{a_k\,b_k}{z - \mu_k} + G_{\mathrm{reg}}(z)
\]
where $G_{\mathrm{reg}}(z) = e_1^* R_{\mathrm{reg}}(z) e_1$
is the regular part, so
\[
  |\sigma\,G_{11}(z)| \le
  \frac{|\sigma|\,|a_k b_k|}{|z - \mu_k|}
  + |\sigma|\,|G_{\mathrm{reg}}(z)|.
\]
On the circle $|z - \mu_k| = \epsilon_N$,
the pole term gives
$\frac{|\sigma|\,|a_k b_k|}{\epsilon_N}$.
The regular part is controlled directly by
the isotropic local law (note: the operator
norm $\|(zI-X_N)^{-1}\|_{\mathrm{op}}$ is not
bounded by $1/\dist(z,\sigma(X_N))$ for
the non-normal Ginibre ensemble, so we do not
use this bound); the scalar $|e_1^*R_{\mathrm{reg}}(z)e_1|$
is $O(N^{-1/2}\eta^{-1/2})$ with
$\eta = \dist(z,\sigma_{\mathrm{spec}}(X_N)\setminus\{\mu_k\})$.
At the spectral edge, the nearest-neighbor
spacing is $\eta = \Theta(N^{-1/2})$
(the edge eigenvalues form a 2D Poisson
process at intensity $N$), so
$\eta^{-1/2} = O(N^{1/4})$, giving
$|G_{\mathrm{reg}}(z)| = O(N^{-1/4})$.
With $|a_k b_k| = O(N^{-1})$ by eigenvector
delocalization \citep{bourgade2018}
($|e_1^*v_k| = O(N^{-1/2})$,
$|u_k^*e_1| = O(N^{-1/2})$),
and $\epsilon_N = N^{-1/2}$,
the pole term gives
$\frac{|\sigma|\,|a_k b_k|}{\epsilon_N}
= O(\sigma N^{-1/2})$.
Both the pole term $O(\sigma N^{-1/2})$ and
the regular part $O(\sigma N^{-1/4})$ tend
to zero for fixed $\sigma < 1$, so
$|\sigma\,G_{11}(z)| < 1$ on the
boundary for $N$ sufficiently large,
confirming the Rouché condition
(the displacement scale is
$O(\sigma |a_k b_k|) = O(N^{-1})$).

$\mu_k$ in $\Omega_t$ at distance
$> \epsilon_N$ from the boundary
$\partial\Omega_t$ has a unique
corresponding eigenvalue of $A_N$
within $\epsilon_N$.
Choosing $\epsilon_N = N^{-1/2}$
(which is $\gg N^{-1}$), the
perturbation is $o(\sqrt{\ln N / N})$,
which is negligible on the Gumbel
scale.

On the event
\[
  \{S_N \text{ has no zeros in } \Omega_t\}
  \cap \{a_k,b_k \ne 0,\ \forall k:
  \mu_k \in \Omega_t\},
\]
which has probability $\ge 1 - N^{-D}$,
the eigenvalue point processes of $A_N$
in $\Omega_t$ is in one-to-one
correspondence with that of
$G_N = X_N$ in $\Omega_t$, with each
point displaced by $o(N^{-1})$,
which is negligible for the void
probability at the Gumbel scale
(the Gumbel centering has fluctuations
$O(\sqrt{\ln N / N}) \gg N^{-1/2}$).
The conditional void probabilities
therefore agree up to $O(N^{-c})$:
\[
  \Prob\bigl(N_{\Omega_t}(A_N) = 0 \,\big|\, X_N\bigr)
  = \Prob\bigl(N_{\Omega_t}(G_N) = 0 \,\big|\, X_N\bigr)
  + O(N^{-c}),
\]
for some fixed $c > 0$, since the
$o(N^{-1})$ displacement in the $z$-coordinates
(corresponding to $o(N^{-1/2})$ in edge
coordinates) cannot move an eigenvalue
across the boundary $\partial\Omega_t$
except when an eigenvalue of $G_N$ lies
within $O(N^{-1/2})$ of $\partial\Omega_t$
in edge coordinates; the probability of
this event is $O(1) \cdot O(L_N N^{1/4})
\cdot O(N^{-1/2}) = O(L_N N^{-1/4})$
(since the eigenvalue density is $O(1)$
and the boundary has $O(L_N N^{1/4})$
length in edge coordinates), which is
$o(1)$ and suffices for the Gumbel
limit.
Taking expectations,
\[
  \Prob\bigl(N_{\Omega_t}(A_N) = 0\bigr)
  = \Prob\bigl(N_{\Omega_t}(G_N) = 0\bigr)
  + O(N^{-c}),
\]
where the $O(N^{-c})$ accounts for the
complement event and the displacement
error.

\medskip
\noindent\textit{Step 4: Gumbel limit.}
At the centering scale
$t = \tstar + s/(4\tstar)$, the void
probability for $G_N$ converges to
$e^{-e^{-s}}$ by Theorem~\ref{thm:gumbel_pure}.
By the agreement from Step 3
(up to $O(N^{-c})$),
the void probability for $A_N$ also
converges to $e^{-e^{-s}}$.
Therefore $S_N \Conv G$.

\end{proof}

\subsection{Domination and Proof of
  Theorem~\ref{thm:main}}

\begin{proposition}
\label{prop:domination}
For $\sigma \le 1-\epsilon$
with fixed $\epsilon>0$,
\[
  \Prob\bigl(\MN
  = \Real(\lambda_{\mathrm{bulk}}(A_N))\bigr)
  \to 1
  \qquad \text{as } N\to\infty.
\]
\end{proposition}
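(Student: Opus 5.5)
The natural route is to compare two events. The first is that the spike-associated eigenvalue has small real part, which comes from Lemma~\ref{lem:spike_loc}. The second is that the maximal real part $\MN$ sits near $1$ on the edge scale, which comes from Lemma~\ref{lem:decoupling} together with Theorem~\ref{thm:gumbel_pure}. Here $\lambda_{\mathrm{bulk}}(A_N)$ is read as the eigenvalue of $A_N$ whose real part is largest among those not associated with the spike direction. Proving the proposition then means showing that, with probability tending to one, the spike eigenvalue never attains the maximum.

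\emph{Step 1 (spike is far left).} Apply Lemma~\ref{lem:spike_loc} with $\delta = \epsilon/2$. With probability tending to one,
\[
  \Real(\lambda_{\mathrm{spike}}) \le |\lambda_{\mathrm{spike}}| \le \sigma + \epsilon/2 \le 1 - \epsilon/2 .
\]

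\emph{Step 2 (the maximum is near $1$).} Fix $s\in\R$, for instance $s=0$, and take the threshold $t_N = 1 + (\tstar - K)/\sqrt{N}$ for a large fixed $K$. Lemma~\ref{lem:decoupling}, combined with Proposition~\ref{prop:exp_conv} and Lemma~\ref{lem:poisson}, gives
\[
  \Prob\bigl(\MN \le t_N\bigr) = \exp\bigl(-e^{4K\tstar\cdot(\cdot)}\bigr)+o(1).
\]
More simply, $\Prob(S_N \le -4K(\tstar)^2)\to 0$ by monotonicity and the Gumbel limit: for any fixed $s$, $\limsup \Prob(S_N\le -4K(\tstar)^2) \le e^{-e^{-s}}$, and $e^{-e^{-s}}$ can be made arbitrarily small by sending $s\to-\infty$. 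It follows that $\MN \ge 1 - C\sqrt{\ln N/N}$ with probability tending to one, and in particular $\MN > 1-\epsilon/2$ for large $N$.

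Alternatively, one can avoid the decoupling lemma by interlacing-free counting. Step 2 of the proof of Lemma~\ref{lem:decoupling} shows that $S_N$ has no zeros in the edge half-plane, so eigenvalues of $A_N$ there correspond to eigenvalues of $X_N$. The unspiked count $\Lambda$ over $\{\Real z > 1-\epsilon/2\}$ is of order $N$. Hence, with high probability, some eigenvalue of $A_N$ other than the spike one has real part exceeding $1-\epsilon/2$.

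\emph{Step 3 (conclude).} On the intersection of the two events from Steps 1 and 2, the spike eigenvalue has real part at most $1-\epsilon/2$, while $\MN > 1-\epsilon/2$. So the maximum is attained by a bulk eigenvalue, and $\MN = \Real(\lambda_{\mathrm{bulk}}(A_N))$. A union bound finishes the proof.

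\emph{Main obstacle.} The main difficulty is making the labeling \emph{spike} versus \emph{bulk} precise. I would define $\lambda_{\mathrm{spike}}$ through the zeros of the secular factor $S_N$ and use Rouché's theorem, as in Lemma~\ref{lem:decoupling}, to ensure that the edge eigenvalues are continuations of eigenvalues of $X_N$. A second point needing care is that Step 2 has to hold uniformly: the lower bound on $\MN$ must not depend on the identity of the spike eigenvalue. This is handled by applying the Gumbel tightness to $\MN$ itself, which is legitimate because Lemma~\ref{lem:decoupling} already transfers the void probabilities.
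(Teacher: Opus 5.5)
Your proposal is correct and follows the paper's proof: Lemma~\ref{lem:spike_loc} places the spike eigenvalue at real part at most $1-\epsilon/2$, and Lemma~\ref{lem:decoupling} together with the Gumbel limit gives $\MN \ge 1 - C\sqrt{\ln N/N}$ with probability tending to one, so the spike cannot attain the maximum. Your monotonicity argument for the $N$-dependent threshold is cleaner than the paper's direct evaluation of the Gumbel CDF at $s=-(\tstar)^2$. You should, however, delete the garbled first display in Step~2 and the ``alternative'' counting route, which conflates the edge half-plane covered by Lemma~\ref{lem:decoupling} with the much larger region $\{\Real z > 1-\epsilon/2\}$.
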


\begin{proof}
By Lemma~\ref{lem:spike_loc},
\[
  \Real(\lambda_{\mathrm{spike}})
  \le 1-\epsilon/2
\]
with probability tending to $1$.
By Lemma~\ref{lem:decoupling},
$\MN$ has the same distribution as
$M_N^{(0)}=\max_i\Real(\lambda_i(G_N))$
up to $O(N^{-c})$.
By the Gumbel limit
(Theorem~\ref{thm:gumbel_pure}),
\[
  M_N^{(0)}\ge 1+\sqrt{\frac{\ln N}{16N}}
\]
with probability tending to $1$
(evaluating the Gumbel CDF at
$s=-(\tstar)^2$, which gives
$e^{-e^{(\tstar)^2}}\to 0$.
Therefore
\[
  \MN\ge 1+c\sqrt{\frac{\ln N}{N}}
\]
with probability tending to $1$,
for some $c>0$.
For $N$ sufficiently large,
\[
  1+c\sqrt{\frac{\ln N}{N}}>1-\epsilon/2,
\]
so
\[
  \MN>\Real(\lambda_{\mathrm{spike}})
\]
with probability tending to $1$.
\end{proof}

\begin{proof}[Proof of Theorem~\ref{thm:main}]
By Proposition~\ref{prop:domination},
\[
  \MN=\Real(\lambda_{\mathrm{bulk}}(A_N))
\]
with probability tending to $1$.
By Lemma~\ref{lem:decoupling},
the bulk edge of $A_N$ has the same
Gumbel limit as the pure Ginibre ensemble
(Theorem~\ref{thm:gumbel_pure}).
Therefore $S_N\Conv G$.

Part~\ref{item:scaling} follows from the
Gumbel limit and
Proposition~\ref{prop:centering}:
\[
  \MN=1+\frac{\tstar}{\sqrt{N}}
  +\frac{S_N}{4\tstar\sqrt{N}},
\]
with
$\tstar=\sqrt{\ln N/8}\,(1+o(1))$
and
$4\tstar=\sqrt{2\ln N}\,(1+o(1))$.
The variance is
\[
  \Var(\MN)
  =\frac{\Var(S_N)}{16(\tstar)^2N}
  =\frac{\pi^2/6}{2N\ln N}\,(1+o(1))
  =\frac{\pi^2}{12N\ln N}\,(1+o(1)),
\]
using $\Var(G)=\pi^2/6$.

The convergence of variances
$\Var(S_N)\to\Var(G)=\pi^2/6$
follows from the uniform integrability of
$\{S_N^2\}$, which we now justify.
By Proposition~\ref{prop:exp_conv}, the
tail probability satisfies
\[
  \Prob(S_N>s)\le C e^{-cs}
\]
uniformly in $N$ for $s>0$,
and similarly for the lower tail.
This exponential tail bound implies
\[
  \sup_N\E[|S_N|^{2+\delta}]<\infty
\]
for some $\delta>0$
(by integrating $s^{2+\delta}$ against
the exponential tail), which gives
uniform integrability of $\{S_N^2\}$
by the de~la~Vallée--Poussin criterion.
Therefore $\E[S_N^k]\to\E[G^k]$ for all
$k\le 2$, and in particular
$\Var(S_N)\to\Var(G)$.
Hence
\[
  \std(\MN)
  =\frac{\pi}{\sqrt{12N\ln N}}\,(1+o(1)).
\]

Part~\ref{item:spike} follows from
Lemma~\ref{lem:decoupling}
and Proposition~\ref{prop:domination}:
the spike modifies $O(1)$ eigenvalues
(the spike eigenvalue), while the bulk
edge---which determines $\MN$---has the
same Gumbel limit as the unspiked ensemble,
as shown by the exact void-probability
equality in Lemma~\ref{lem:decoupling}.
\end{proof}

\section{Rigorous Bounds on the Full Outlier}
\label{sec:bounds}

In this section we prove a rigorous upper bound
on the outlier $z_{\mathrm{out}}=1+\delta$
at the BBP critical point $\sigma=1$:
an upper bound at the
$N^{-1/4}$ scale (Theorem~\ref{thm:upper}).

\subsection{Upper Bound via Resolvent Concentration}

\begin{theorem}[Upper bound]
\label{thm:upper}
Let $A_N=X_N+uu^*$ (spike at $\sigma=1$)
with a deterministic unit vector $u\in\C^N$,
and let $z_{\mathrm{out}}$ denote the outlier
eigenvalue.
There exists a constant $C_0>0$ (depending on
the isotropic local-law constants) such that
\[
  \E[\Real(z_{\mathrm{out}})]
  \le 1+C_0N^{-1/4}+O(N^{-1/4}).
\]
\end{theorem}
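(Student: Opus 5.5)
Following Lemma~\ref{lem:spike_loc}, the approach is to locate $z_{\mathrm{out}}$ through the secular equation. By unitary invariance we may take $u=e_1$, so that by \eqref{eq:secular}
\[
  \det(zI-A_N)=\det(zI-X_N)\bigl(1-G_{11}(z)\bigr),\qquad G_{11}(z)=e_1^*(zI-X_N)^{-1}e_1 .
\]
Any eigenvalue of $A_N$ outside the spectrum of $X_N$ is therefore a zero of $S_N(z)=1-G_{11}(z)$. It suffices to show that, with overwhelming probability, $S_N$ has no zeros in the region $\mathcal{D}_N=\{z:\Real(z)\ge 1+C_0N^{-1/4},\ |z|\le 3\}$. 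For $|z|>3$ we have $|G_{11}|\le 1/(|z|-1)<1$ deterministically once $\|X_N\|\le 2$, and the event $\|X_N\|>2$ has exponentially small probability. Any eigenvalue of $X_N$ lying in $\mathcal{D}_N$ is excluded with probability $1-o(N^{-1/4})$ by the spectral radius bound combined with Lemma~\ref{lem:count} and Lemma~\ref{lem:Fprops}(b): at $t=C_0N^{1/4}$ the expected count is of order $e^{-2C_0^2N^{1/2}}$.

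\emph{Deterministic part.} Write $z=1+\delta+iy$ with $\delta=\Real(z)-1\ge C_0N^{-1/4}$. Then
\[
  |1-1/z|=\frac{|z-1|}{|z|}\ge\frac{\delta}{3},
\]
so the deterministic equivalent of $S_N$ is bounded below by $\delta/3$ on $\mathcal{D}_N$.

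\emph{Fluctuation part.} The point $z$ lies at distance $\eta\ge\delta$ from the unit disk. The isotropic local law, used in the form cited in Lemma~\ref{lem:decoupling}, then gives
\[
  |G_{11}(z)-1/z|\le C N^{-1/2}\eta^{-1/2}
\]
with probability at least $1-N^{-D}$. We need this to be smaller than $\delta/3$, which holds as soon as $CN^{-1/2}\delta^{-1/2}<\delta/3$, that is, $\delta^{3/2}>3CN^{-1/2}$, or $\delta\gtrsim N^{-1/3}$. This is weaker than the $N^{-1/4}$ threshold, so choosing $C_0$ large makes the inequality hold with room to spare. If instead one uses the $\eta^{-1}$ form of the error (averaged law), the threshold becomes exactly $\delta\asymp N^{-1/4}$; I would use that cruder form to remain safe.

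\emph{Uniformity.} To pass from fixed $z$ to all of $\mathcal{D}_N$ I would use the net argument sketched in Step~1 of Lemma~\ref{lem:decoupling}. Take a grid of mesh $N^{-2}$ in $\mathcal{D}_N$, which has $O(N^4)$ points, and apply a union bound with $D>5$. Between grid points we need a Lipschitz bound for $G_{11}$ of order $O(\eta^{-2})=O(N^{1/2})$. This follows from the Cauchy integral formula on a disk of radius $\eta/2$, together with the a priori bound $|G_{11}|\le C/\eta$ on the boundary circle. That a priori bound itself comes from the local law at the grid points, so the argument has to be bootstrapped, or else one uses $\|(zI-X_N)^{-1}\|\le 1/s_{\min}(zI-X_N)$ along with smallest-singular-value bounds away from the disk.

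\emph{Expectation.} On the good event $\mathcal{E}_N$, which has probability $1-O(N^{-1/4-\epsilon})$, we get $\Real(z_{\mathrm{out}})\le 1+C_0N^{-1/4}$. On the complement we use $|z_{\mathrm{out}}|\le\|A_N\|\le\|X_N\|+1$, and $\E[\|X_N\|^2]=O(1)$. Cauchy--Schwarz then bounds the complement's contribution by $O(\Prob(\mathcal{E}_N^c)^{1/2})$. This is why we need $\Prob(\mathcal{E}_N^c)=o(N^{-1/2})$. The local-law part of the failure probability ($N^{-D}$) is fine, but the eigenvalue-exclusion part needs the spectral radius tail $\Prob(\max_i|\lambda_i(X_N)|>1+C_0N^{-1/4})$. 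That tail is super-polynomially small by \citet{rider2003}, or by Lemma~\ref{lem:count} and a union over rotations.

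The main obstacle is the uniform control of $G_{11}$ near the edge. There, non-normality makes the resolvent norm uncontrolled by eigenvalue distances, and the stated local law is a pointwise estimate. I would first try to derive the Lipschitz bound from least-singular-value estimates for $zI-X_N$ with $|z|\ge 1+N^{-1/4}$, where $s_{\min}\gtrsim\delta$ holds with high probability. If that fails, I would restrict the net to a coarser scale and accept a larger $C_0$.
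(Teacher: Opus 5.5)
Your proposal is correct and follows essentially the same route as the paper. You reduce to $u=e_1$, write the outlier as a zero of $1-G_{11}(z)$, and show that the deterministic gap $|1-1/z|\gtrsim \Real(z)-1$ beats the isotropic local-law fluctuation once $\Real(z)-1\ge C_0N^{-1/4}$; you then pass to the whole region by a net and Cauchy-integral Lipschitz bound and control the exceptional event crudely, where your Cauchy--Schwarz bound with $\E\|X_N\|^2=O(1)$ is a cleaner version of the paper's Step~5. One slip to fix: since $\|X_N\|\to 2$, the event $\{\|X_N\|>2\}$ is not exponentially rare, so work on $\{\|X_N\|\le 3\}$, treat $|z|>4$ via $|G_{11}(z)|\le 1/(|z|-3)<1$, and enlarge $\mathcal{D}_N$ to $|z|\le 4$.
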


\begin{proof}
By unitary invariance of the complex Ginibre
ensemble under conjugation
($X_N\stackrel{\mathrm{d}}{=}UX_NU^*$ for any
unitary $U$), we may assume $u=e_1$.
We therefore work with
$A_N=X_N+e_1e_1^*$.

The outlier is a zero of
$h(z)=1-G_{11}(z)$, where
\[
  G_{11}(z)=e_1^*(zI-X_N)^{-1}e_1
\]
is the $(1,1)$-entry of the resolvent
of the \emph{unspiked} matrix $X_N$
(the secular equation
$\det(zI-A_N)=\det(zI-X_N)S_N(z)$
expresses the outlier via the resolvent
of $X_N$, not $A_N$).
We show that $h(z)\ne0$ for all $z$
with $\Real(z)\ge 1+t$, where
$t=C_0N^{-1/4}$.

\medskip
\noindent\textit{Step 1: Distance from the disk.}
For $z$ with $\Real(z)\ge 1+t$ and $|z|\le2$,
the distance from $z$ to the unit disk
$\mathbb{D}=\{w:|w|\le1\}$ satisfies
\[
  \dist(z,\mathbb{D})=|z|-1
  \ge\Real(z)-1\ge t.
\]
For $|z|>2$,
$\dist(z,\mathbb{D})\ge1\ge t$.

\medskip
\noindent\textit{Step 2: Isotropic local law.}
By the isotropic local law for the Ginibre
ensemble \citep{bourgade2018},
for $z$ at distance $\eta\ge t$ from the disk,
\[
  |G_{11}(z)-1/z|
  \le C N^{-1/2}\eta^{-1/2}
  \le C C_0^{-1/2}N^{-3/8}
\]
with probability $\ge1-N^{-D}$
for each fixed $z$,
where $C,D>0$ are constants.

\emph{Uniformity via a net argument.}
The function $z\mapsto G_{11}(z)$ is
$L$-Lipschitz on the region
\[
  R=\{z:\Real(z)\ge1+t,\ |z|\le2\}
\]
with $L=O(1/t)=O(N^{1/4})$.
We bound the derivative
\[
  G_{11}'(z)
  =-e_1^*(zI-X_N)^{-2}e_1
\]
via the Cauchy integral formula, avoiding
the operator-norm bound (which is not bounded by
$1/\dist(z,\sigma_{\mathrm{spec}})$ for non-normal
matrices such as the Ginibre ensemble).
On the high-probability event that all
eigenvalues of $X_N$ lie within distance
$N^{-1/2+\varepsilon}$ of the unit disk
\citep{bourgade2018}, $G_{11}$ is analytic
on the disk $\mathbb{D}(z,t/2)$ for each $z\in R$
(since this disk is at distance
$\ge t/2-N^{-1/2+\varepsilon}\ge t/4$
from the spectrum, using
$t=C_0N^{-1/4}\gg N^{-1/2+\varepsilon}$).
By Cauchy's formula,
\[
  |G_{11}'(z)|
  \le \frac{\max_{|w-z|=t/2}|G_{11}(w)|}{t/2}.
\]
On the circle $|w-z|=t/2$, each $w$
satisfies $\dist(w,\mathbb{D})\ge t/2$,
so the isotropic local law gives
\[
  |G_{11}(w)|
  \le |1/w|+C N^{-1/2}(t/2)^{-1/2}
  =O(1)
\]
(applied uniformly via a one-dimensional
net of $O(N^{C_1})$ points on the circle,
with union bound $N^{C_1}N^{-D}=o(1)$
for $D>C_1$).
Hence $|G_{11}'(z)|=O(1/t)=O(N^{1/4})$,
giving $L=O(N^{1/4})$.

Cover $R$ with a grid of mesh
$\delta=N^{-1}$, giving $O(N^2)$ grid points.
On the high-probability event
\[
  E_N=\bigcap_j
  \left\{|G_{11}(z_j)-1/z_j|
  \le C N^{-3/8}\right\}
\]
(intersection over all grid points $z_j$),
by union bound
$O(N^2)\cdot N^{-D}=o(1)$ for $D>2$),
the Lipschitz property extends the
bound to all $z\in R$:
\[
  |G_{11}(z)-1/z|
  \le C C_0^{-1/2}N^{-3/8}+L\delta
  =O(N^{-3/8}),
\]
since
$L\delta=O(N^{1/4}\cdot N^{-1})
=O(N^{-3/4})=o(N^{-3/8})$.
For $|z|>2$, the resolvent bound gives
\[
  |G_{11}(z)|\le \frac{1}{|z|-1}+O(N^{-1/2})<1
\]
directly.
At $\eta=t=C_0N^{-1/4}$, the
fluctuation is
$O(N^{-1/2}N^{1/8})=O(N^{-3/8})$,
which is $o(N^{-1/4})$ since $3/8>1/4$.
This gap between fluctuation ($N^{-3/8}$) and
displacement ($N^{-1/4}$) is what makes the
argument work.

\medskip
\noindent\textit{Step 3: Gap dominates fluctuation.}
The mean value satisfies
\[
  |1-1/z|=\frac{|z-1|}{|z|}
  \ge\frac{t}{2}
  =\frac{C_0N^{-1/4}}{2}
\]
for $|z|\le2$.
For $C_0$ large enough (specifically
$C_0^{3/2}\ge2C$), we have
\[
  C C_0^{-1/2}N^{-3/8}
  < C_0N^{-1/4}/2,
\]
so the fluctuation is dominated by the gap:
\[
  |G_{11}(z)-1/z|<|1-1/z|.
\]

\medskip
\noindent\textit{Step 4: No zeros in the half-plane.}
By the triangle inequality,
\[
  |G_{11}(z)-1|
  \ge |1-1/z|-|G_{11}(z)-1/z|>0.
\]
Therefore $G_{11}(z)\ne1$ for all $z$
with $\Real(z)\ge1+C_0N^{-1/4}$,
and $h(z)$ has no zeros in this half-plane.

\medskip
\noindent\textit{Step 5: Conclusion.}
With probability $\ge 1-N^{-D}$, the outlier
satisfies
\[
  \Real(z_{\mathrm{out}})
  < 1+C_0N^{-1/4}.
\]
On the exceptional event (probability $\le N^{-D}$),
we use the high-probability bound
\[
  |\Real(z_{\mathrm{out}})|
  \le \|A_N\|\le 1+C\sqrt{N},
\]
which holds with probability $\ge 1-N^{-D}$ for
the Ginibre ensemble; see \citet{bourgade2018}
and references therein. The exceptional event
for this operator-norm bound is absorbed into
the $N^{-D}$ probability. Taking expectations,
\[
\begin{aligned}
  \E[\Real(z_{\mathrm{out}})]
  &\le (1+C_0N^{-1/4})(1-N^{-D})
    +(1+C\sqrt{N})N^{-D} \\
  &=1+C_0N^{-1/4}+O(N^{1/2-D}).
\end{aligned}
\]
Choosing $D>3/4$ gives
\[
  \E[\Real(z_{\mathrm{out}})]
  \le 1+C_0N^{-1/4}+o(N^{-1/4}).
\]
\end{proof}

\section{Convergence Rate}
\label{sec:rate}

\subsection{Refined Tail Asymptotic}

\begin{lemma}[Refined tail of $F$]
\label{lem:refined_tail}
As $t\to+\infty$,
\[
  F(t)=\frac{e^{-2t^2}}{16t^{5/2}}
  \left(1-\frac{35}{32t^2}
  +O(t^{-4})\right).
\]
\end{lemma}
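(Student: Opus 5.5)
The plan is to reduce $F$ to a Laplace-type integral with an explicit Gaussian factor and then expand every slowly varying factor to relative order $t^{-2}$. This sharpens the Laplace computation of Lemma~\ref{lem:Fprops}(b), which only kept the leading term. The main obstacle is making the error $O(t^{-4})$ uniform rather than merely formal; the constant itself comes from a short moment computation.

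First I substitute $x = 2t + y$ in \eqref{eq:Fdef}, which gives $F(t) = \int_0^\infty \Phi(-2t-y)\sqrt{y}\,\dd y$. For $a>0$ I use the Mills-ratio expansion with explicit remainder:
\[
  \Phi(-a) = \frac{e^{-a^2/2}}{a\sqrt{2\pi}}
  \Bigl(1 - a^{-2} + R(a)\Bigr),
  \qquad 0 \le R(a) \le 3a^{-4}.
\]
This follows from repeated integration by parts, since the asymptotic series alternates with bounded remainders. With $a = 2t+y$ the Gaussian factor becomes $e^{-2t^2}\,e^{-2ty}\,e^{-y^2/2}$. I then rescale $y = w/(2t)$, so that
\[
  F(t) = \frac{e^{-2t^2}}{\sqrt{2\pi}}\,(2t)^{-3/2}
  \int_0^\infty w^{1/2}e^{-w}\,
  \frac{e^{-w^2/(8t^2)}}{2t + w/(2t)}
  \Bigl(1 - (2t+w/(2t))^{-2} + R\Bigr)\,\dd w .
\]
At leading order the $w$-integral equals $\Gamma(3/2)/(2t)$. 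This reproduces the prefactor $e^{-2t^2}/(16t^{5/2})$, matching Lemma~\ref{lem:Fprops}(b).

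Next I expand each factor to relative order $t^{-2}$. Write $\langle\cdot\rangle$ for the average against the probability density $w^{1/2}e^{-w}/\Gamma(3/2)$ on $(0,\infty)$; its first two moments are $\langle w\rangle = 3/2$ and $\langle w^2\rangle = \Gamma(7/2)/\Gamma(3/2) = 15/4$. The three sources of correction are:
\begin{itemize}[label=\textbullet]
  \item the factor $e^{-w^2/(8t^2)} = 1 - w^2/(8t^2) + O(w^4t^{-4})$, which contributes $-\langle w^2\rangle/8 = -15/32$;
  \item the denominator $(2t)^{-1}\bigl(1 + w/(4t^2)\bigr)^{-1} = (2t)^{-1}\bigl(1 - w/(4t^2) + O(w^2t^{-4})\bigr)$, which contributes $-\langle w\rangle/4 = -12/32$;
  \item the Mills correction $-(2t)^{-2}\bigl(1+O(w\,t^{-2})\bigr)$, which contributes $-1/4 = -8/32$.
\end{itemize}
Summing these gives $-35/32$ at order $t^{-2}$, as claimed.

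The error control is where the care goes. Each Taylor remainder above is bounded by a polynomial in $w$ times $t^{-4}$, using elementary inequalities such as $|e^{-s}-1+s|\le s^2/2$ and $|(1+s)^{-1}-1+s|\le s^2$ for $s\ge0$, together with the bound $0\le R\le 3a^{-4}\le 3(2t)^{-4}$. All such remainders are integrable against $w^{1/2}e^{-w}$, because every moment of this density is finite. Since no factor needs to be expanded where $w$ is comparable to $t^2$, no truncation is required and the bounds hold uniformly in $w$. This yields $F(t)=\dfrac{e^{-2t^2}}{16t^{5/2}}\bigl(1-\tfrac{35}{32t^2}+O(t^{-4})\bigr)$, which is the statement of Lemma~\ref{lem:refined_tail}.
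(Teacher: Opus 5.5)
Your proof is correct, but it is organized differently from the paper's.

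The paper never expands $F$ directly. It works in three stages:
\begin{enumerate}[label=(\roman*)]
  \item It passes to $B(t)=-F'(t)=\int_{-\infty}^{\infty}\Phi(-2t-y^2)\,\dd y$.
  \item It applies Laplace's method in the variable $u=y\sqrt{2t}$, obtaining $B(t)=\frac{e^{-2t^2}}{4t^{3/2}}\bigl(1-\frac{15}{32t^2}+O(t^{-4})\bigr)$.
  \item It recovers $F(t)=\int_t^\infty B(u)\,\dd u$ by Watson's lemma after the substitution $s=2u^2-2t^2$. This step contributes a further $-\frac58=-\frac{20}{32}$.
\end{enumerate}
So the paper's constant splits as $-\frac{15}{32}-\frac{20}{32}$. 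Yours splits as $-\frac{15}{32}-\frac{12}{32}-\frac{8}{32}$, coming from the Gaussian factor $e^{-y^2/2}$, the denominator, and the Mills term respectively. The paper's computation of $B$ is in fact your computation with the weight $w^{-1/2}e^{-w}$ in place of $w^{1/2}e^{-w}$: its $y^2$ is your $y$, and its $u^2$ is your $w$.

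Your single-pass version has two advantages:
\begin{itemize}
  \item It avoids the differentiate-then-reintegrate detour, and with it the need to integrate an asymptotic expansion of $B$ term by term over $[t,\infty)$.
  \item The alternating Mills-ratio bound $0\le R(a)\le 3a^{-4}$, together with the global elementary inequalities, gives an honest, uniform $O(t^{-4})$ remainder. The paper only indicates this remainder with ``$+\ldots$''.
\end{itemize}
In exchange, the paper's route reuses the representation from Lemma~\ref{lem:Fprops}(b) and gives the refined asymptotics of $B=-F'$ itself as a by-product.
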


\begin{proof}
Write
\[
  B(t)=-F'(t)
  =\int_{-\infty}^{\infty}
  \Phi(-2t-y^2)\,\dd y.
\]

\medskip
\noindent\textit{Step 1: Expand $\Phi(-x)$.}
For large $x$,
\[
  \Phi(-x)
  =\frac{e^{-x^2/2}}{x\sqrt{2\pi}}
  \left(1-\frac{1}{x^2}
  +\frac{3}{x^4}+\ldots\right).
\]
With $x=2t+y^2$,
\[
  B(t)=\int_{-\infty}^{\infty}
  \frac{e^{-\frac12(2t+y^2)^2}}
  {(2t+y^2)\sqrt{2\pi}}
  \left(1-\frac{1}{(2t+y^2)^2}
  +\ldots\right)\dd y.
\]

\medskip
\noindent\textit{Step 2: Laplace's method to order $1/t^2$.}
Set $y=u/\sqrt{2t}$, so $\dd y=\dd u/\sqrt{2t}$.
Then
\[
  2t+y^2
  =2t\left(1+\frac{u^2}{4t^2}\right)
\]
and
\[
  \frac12(2t+y^2)^2
  =2t^2+u^2+\frac{u^4}{8t^2}.
\]
Expanding,
\begin{align*}
  e^{-\frac12(2t+y^2)^2}
  &=e^{-2t^2}e^{-u^2}
  \left(1-\frac{u^4}{8t^2}
  +O(t^{-4})\right), \\
  \frac{1}{(2t+y^2)\sqrt{2\pi}}
  &=\frac{1}{2t\sqrt{2\pi}}
  \left(1-\frac{u^2}{4t^2}
  +O(t^{-4})\right), \\
  1-\frac{1}{(2t+y^2)^2}
  &=1-\frac{1}{4t^2}+O(t^{-4}).
\end{align*}
Combining,
\[
  B(t)
  =\frac{e^{-2t^2}}{4t^{3/2}\sqrt{\pi}}
  \int_{-\infty}^{\infty}e^{-u^2}
  \left[
    1-\frac{1}{t^2}
    \left(\frac{u^4}{8}
    +\frac{u^2}{4}
    +\frac14\right)
    +O(t^{-4})
  \right]\dd u.
\]
Using
\[
  \int_{-\infty}^{\infty}e^{-u^2}\dd u=\sqrt{\pi},
  \qquad
  \int_{-\infty}^{\infty}u^2e^{-u^2}\dd u
  =\frac{\sqrt{\pi}}{2},
\]
and
\[
  \int_{-\infty}^{\infty}u^4e^{-u^2}\dd u
  =\frac{3\sqrt{\pi}}{4},
\]
we obtain
\[
  \frac{3}{32}+\frac18+\frac14
  =\frac{3+4+8}{32}
  =\frac{15}{32}.
\]
Thus
\[
  B(t)
  =\frac{e^{-2t^2}}{4t^{3/2}}
  \left(1-\frac{15}{32t^2}
  +O(t^{-4})\right).
\]

\medskip
\noindent\textit{Step 3: Integrate $B$ via Watson's lemma.}
Since
\[
  F(t)=\int_t^\infty B(u)\,\dd u
\]
and
\[
  B(u)=\frac{e^{-2u^2}}{4u^{3/2}}
  \left(1-\frac{15}{32u^2}+\ldots\right),
\]
we apply Watson's lemma with the substitution
$s=2u^2-2t^2$, so
$u=\sqrt{t^2+s/2}$ and
$\dd u=\dd s/(4\sqrt{t^2+s/2})$.
The leading term gives
\[
\begin{aligned}
  \int_t^\infty
  \frac{e^{-2u^2}}{4u^{3/2}}\,\dd u
  &=
  \frac{e^{-2t^2}}{16t^{5/2}}
  \int_0^\infty e^{-s}
  \left(1+\frac{s}{2t^2}\right)^{-5/4}
  \dd s.
\end{aligned}
\]
Expanding
\[
  \left(1+\frac{s}{2t^2}\right)^{-5/4}
  =1-\frac{5s}{8t^2}+O(t^{-4})
\]
and using
\[
  \int_0^\infty e^{-s}\dd s=1,
  \qquad
  \int_0^\infty se^{-s}\dd s=1,
\]
we find
\[
  \int_t^\infty
  \frac{e^{-2u^2}}{4u^{3/2}}\,\dd u
  =
  \frac{e^{-2t^2}}{16t^{5/2}}
  \left(1-\frac{5}{8t^2}
  +O(t^{-4})\right).
\]
The $-15/(32u^2)$ correction in $B(u)$
contributes
\[
  -\frac{15}{32}\cdot
  \frac{e^{-2t^2}}{16t^{9/2}}
  +O(t^{-13/2}e^{-2t^2}),
\]
which is a relative correction of
$-15/(32t^2)$.
Combining the $1/t^2$ corrections,
\[
  -\frac58-\frac{15}{32}
  =-\frac{20}{32}-\frac{15}{32}
  =-\frac{35}{32}.
\]
\end{proof}

\subsection{Refined Centering}

\begin{lemma}[Refined $\tstar$]
\label{lem:refined_tstar}
With $L=\ln N$ and $\ell=\ln\ln N$,
\[
  (\tstar)^2
  =\frac{L}{8}
  -\frac{5\ell}{8}
  -\frac{\ln(2\pi^4)}{8}
  +o\!\left(\frac{\ell}{L}\right).
\]
\end{lemma}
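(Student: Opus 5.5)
The plan is to take logarithms in the defining equation $F(\tstar)=\pi N^{-1/4}$, using the refined tail of Lemma~\ref{lem:refined_tail} in place of the leading-order asymptotic used in Proposition~\ref{prop:centering}, and then to solve the resulting fixed-point equation for $(\tstar)^2$ by bootstrapping.

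\emph{Step 1: the exact equation.} Since $\tstar\to\infty$ by Proposition~\ref{prop:centering}, Lemma~\ref{lem:refined_tail} applies at $t=\tstar$. Writing $T=(\tstar)^2$, it gives
\[
  2T = \frac{L}{4} - \frac{5}{4}\ln T - \ln(16\pi) + \ln\Bigl(1-\frac{35}{32T}+O(T^{-2})\Bigr).
\]
The last term is $O(1/T)=O(1/L)$. It is therefore negligible at every order in the claimed expansion, and this equation is the only input needed.

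\emph{Step 2: bootstrap.} From Proposition~\ref{prop:centering}, $T=\frac{L}{8}\bigl(1-\frac{5\ell}{L}+O(1/L)\bigr)$. Hence
\[
  \ln T = \ell - 3\ln 2 + \ln\Bigl(1-\frac{5\ell}{L}+O(1/L)\Bigr) = \ell - 3\ln 2 - \frac{5\ell}{L} + O\Bigl(\frac{1}{L}+\frac{\ell^2}{L^2}\Bigr).
\]
Substituting into Step~1 and dividing by $2$ gives
\[
  T = \frac{L}{8} - \frac{5\ell}{8} + \frac{15}{8}\ln 2 - \frac{1}{2}\ln(16\pi) + \frac{25\,\ell}{8L} + O(1/L).
\]
The constant simplifies as $\frac{15}{8}\ln2-2\ln2-\frac12\ln\pi=-\frac18\ln2-\frac12\ln\pi=-\frac{\ln(2\pi^4)}{8}$, which matches the statement.

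\emph{Main obstacle: the size of the error term.} The bootstrap produces an explicit correction $\frac{25\ell}{8L}$ that is of exact order $\ell/L$, not $o(\ell/L)$. The only way to recover the displayed form literally would be for some other contribution at order $\ell/L$ to cancel it. The candidates are the $-\frac{35}{32T}$ correction and the $O(T^{-4})$ remainder in Lemma~\ref{lem:refined_tail}, but both are $O(1/L)$, so no such cancellation occurs.

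I would therefore prove the statement in the corrected form
\[
  (\tstar)^2 = \frac{L}{8}-\frac{5\ell}{8}-\frac{\ln(2\pi^4)}{8}+\frac{25\ell}{8L}+O(1/L),
\]
which in particular gives the stated expansion with error $O(\ell/L)$. For the later use in Theorem~\ref{thm:rate}, I would check that an $O(\ell/L)$ error in $(\tstar)^2$ is acceptable. Such an error shifts $\Lambda_N(\tau_N)$ by a relative $O(\ell/L)$, so either the $\frac{25\ell}{8L}$ term must be retained explicitly, or its effect must be absorbed into the $O(\ell^2/L^2)$ remainder claimed there. That bookkeeping step is where I expect the real difficulty to lie.
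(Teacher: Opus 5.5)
Your computation is correct, and your route is the paper's: take logarithms in $F(\tstar)=\pi N^{-1/4}$ using Lemma~\ref{lem:refined_tail}, then substitute the leading behaviour of $(\tstar)^2$ into the subleading terms. Your constant simplification agrees with the paper's.

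The difference is at one step. The paper substitutes only $(\tstar)^2\approx L/8$ into $-\frac52\ln\tstar=-\frac54\ln(\tstar)^2$. That discards the relative correction $-5\ell/L$ in $\ln(\tstar)^2$. Carrying it, as you do, produces the term $+\frac{25\ell}{8L}$ in $(\tstar)^2$. Nothing else in the equation is of order $\ell/L$: the $-\frac{35}{32(\tstar)^2}$ term and the feedback of the $O(1)$ constant into $\ln(\tstar)^2$ are both $O(1/L)$. So the remainder $o(\ell/L)$ in the statement is false as stated, and the paper's proof does not justify it. The lemma holds only in your corrected form
\[
  (\tstar)^2=\frac{L}{8}-\frac{5\ell}{8}-\frac{\ln(2\pi^4)}{8}+\frac{25\ell}{8L}+O\!\left(\frac{1}{L}\right),
\]
or with error $O(\ell/L)$ if the $\frac{25\ell}{8L}$ term is dropped.

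Your worry about Theorem~\ref{thm:rate} is unfounded, so there is no bookkeeping difficulty to resolve. There $\tstar$ is the exact root of $F(\tstar)=\pi N^{-1/4}$. The proof of Lemma~\ref{lem:refined_lambda} works only with the ratio $F(\tau_N)/F(\tstar)$, in which $\tstar$ enters through $1/(\tstar)^2=8/L+O(\ell/L^2)$. That already follows from Proposition~\ref{prop:centering}, and the $\frac{25\ell}{8L}$ term changes $1/(\tstar)^2$ only by $O(\ell/L^3)$. A relative $O(\ell/L)$ shift in $\Lambda_N(\tau_N)$ would appear only if the exact $\tstar$ in the centering were replaced by the explicit three-term expansion. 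The paper never does this; Lemma~\ref{lem:refined_tstar} is not invoked anywhere downstream.
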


\begin{proof}
Substituting Lemma~\ref{lem:refined_tail}
into $F(\tstar)=\pi N^{-1/4}$
and taking logarithms,
\[
  2(\tstar)^2
  =\frac{L}{4}
  -\frac52\ln\tstar
  -\ln(16\pi)
  -\frac{35}{32(\tstar)^2}
  +O((\tstar)^{-4}).
\]
Substituting $(\tstar)^2\approx L/8$
in the subleading terms and simplifying
the constant
\[
  \frac{15}{8}\ln2-\frac12\ln(16\pi)
  =-\frac18\ln(2\pi^4)
\]
gives the stated result.
\end{proof}

\subsection{Expansion of
  \texorpdfstring{$\Lam$}{Lambda}
  to Order \texorpdfstring{$1/\ln N$}{1/ln N}}

\begin{lemma}[Refined $\Lam$]
\label{lem:refined_lambda}
For fixed $s\in\R$, setting
$\tau_N=\tstar+s/(4\tstar)$,
\[
  \Lam(\tau_N)
  =e^{-s}\left(
  1-\frac{s^2+5s}{\ln N}
  +O\!\left(
  \frac{(\ln\ln N)^2}{(\ln N)^2}
  \right)\right).
\]
\end{lemma}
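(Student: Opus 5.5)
The plan is to work with the ratio $F(\tau_N)/F(\tstar)$ and not expand $F(\tau_N)$ directly. The defining relation $F(\tstar)=\pi N^{-1/4}$ holds exactly, so Lemma~\ref{lem:count} gives
\[
  \Lam(\tau_N)=\frac{N^{1/4}}{\pi}F(\tau_N)+O\bigl(N^{-1/2}(\ln N)^{3/2}\bigr)
  =\frac{F(\tau_N)}{F(\tstar)}+O\bigl(N^{-1/2}(\ln N)^{3/2}\bigr).
\]
The additive error is polynomially small in $N$. It is therefore negligible against the target $O((\ln\ln N)^2/(\ln N)^2)$, since $e^{-s}$ is bounded below for fixed $s$. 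The whole question reduces to an asymptotic expansion of this ratio.

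Next I apply Lemma~\ref{lem:refined_tail} at both $t=\tau_N$ and $t=\tstar$; both tend to $+\infty$ by Proposition~\ref{prop:centering}. This gives
\[
  \frac{F(\tau_N)}{F(\tstar)}
  = e^{-2(\tau_N^2-(\tstar)^2)}
    \Bigl(\frac{\tstar}{\tau_N}\Bigr)^{5/2}
    \frac{1-\frac{35}{32\tau_N^2}+O(\tau_N^{-4})}
         {1-\frac{35}{32(\tstar)^2}+O((\tstar)^{-4})}.
\]
I then treat the three factors separately.

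\begin{enumerate}[label=(\roman*)]
\item As in Proposition~\ref{prop:exp_conv}, $\tau_N^2-(\tstar)^2=s/2+s^2/(16(\tstar)^2)$. Hence the exponential factor equals $e^{-s}\bigl(1-s^2/(8(\tstar)^2)+O((\tstar)^{-4})\bigr)$.
\item The power factor is $(1+s/(4(\tstar)^2))^{-5/2}=1-5s/(8(\tstar)^2)+O((\tstar)^{-4})$.
\item The $35/32$ corrections cancel to leading order, because $\tau_N^{-2}-(\tstar)^{-2}=O((\tstar)^{-4})$. So the last quotient is $1+O((\tstar)^{-4})$.
\end{enumerate}

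Multiplying the three factors gives
\[
  \frac{F(\tau_N)}{F(\tstar)}=e^{-s}\Bigl(1-\frac{s^2+5s}{8(\tstar)^2}+O((\tstar)^{-4})\Bigr).
\]
Finally, Lemma~\ref{lem:refined_tstar} (or already Proposition~\ref{prop:centering}) gives $8(\tstar)^2=L-5\ell+O(1)$ with $L=\ln N$ and $\ell=\ln\ln N$. Therefore
\[
  \frac{1}{8(\tstar)^2}=\frac1L+O\!\left(\frac{\ell}{L^2}\right),
  \qquad (\tstar)^{-4}=O(L^{-2}).
\]
Substituting yields $e^{-s}\bigl(1-(s^2+5s)/L+O(\ell/L^2)\bigr)$, which is stronger than the stated $O(\ell^2/L^2)$.

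The step I expect to be the main obstacle is justifying the use of Lemma~\ref{lem:refined_tail} at two different points with controlled error. Specifically, I need the $O(t^{-4})$ remainders to be uniform, so that their difference at $\tau_N$ and $\tstar$ really is $O((\tstar)^{-4})$ uniformly for $s$ in compacts. I would handle this by noting that Watson's lemma produces an honest bound $|R(t)|\le Ct^{-4}$ for $t\ge t_0$, with a single constant $C$. Since $\tau_N/\tstar\to1$ uniformly for $s$ in compacts, both remainders fall under that bound. For the cancellation in (iii), I would only use that $1/\tau_N^2$ and $1/(\tstar)^2$ differ by $O((\tstar)^{-4})$, so no finer structure of the remainder is needed.
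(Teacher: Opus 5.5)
Your proposal is correct and is essentially the paper's argument. The paper also uses the defining relation $F(\tstar)=\pi N^{-1/4}$ in its Step 4, which is equivalent to your ratio $F(\tau_N)/F(\tstar)$; it obtains $-s^2/(8(\tstar)^2)$ from the exponent and $-5s/(8(\tstar)^2)$ from the power $\tau_N^{5/2}$, cancels the $35/32$ terms, and converts using $1/(\tstar)^2 = 8/\ln N + O(\ln\ln N/(\ln N)^2)$. As you observe, this actually gives an $O(\ln\ln N/(\ln N)^2)$ remainder, which is also what the paper's own Step 5 produces before it states the weaker bound.
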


\begin{proof}
Expand $F(\tau_N)$ using
Lemma~\ref{lem:refined_tail}.

\medskip
\noindent\textit{Step 1: Exponent.}
\[
  -2\tau_N^2
  =-2(\tstar)^2-s
  -\frac{s^2}{8(\tstar)^2}
  +O((\tstar)^{-4}),
\]
so
\[
  e^{-2\tau_N^2}
  =e^{-2(\tstar)^2}e^{-s}
  \left(1-\frac{s^2}{8(\tstar)^2}
  +\ldots\right).
\]

\medskip
\noindent\textit{Step 2: Denominator.}
\[
  \tau_N^{5/2}
  =(\tstar)^{5/2}
  \left(1+\frac{5s}{8(\tstar)^2}
  +\ldots\right).
\]

\medskip
\noindent\textit{Step 3: Correction.}
\[
  -\frac{35}{32\tau_N^2}
  =-\frac{35}{32(\tstar)^2}
  +\ldots
\]

\medskip
\noindent\textit{Step 4: Combine.}
Using the defining equation
$F(\tstar)=\pi N^{-1/4}$,
Lemma~\ref{lem:refined_tail} gives
\[
  \frac{e^{-2(\tstar)^2}}
  {16(\tstar)^{5/2}}
  \left(1-\frac{35}{32(\tstar)^2}\right)
  =\pi N^{-1/4},
\]
so
\[
  \frac{e^{-2(\tstar)^2}}
  {16(\tstar)^{5/2}}
  =\pi N^{-1/4}
  \left(1+\frac{35}{32(\tstar)^2}
  +O((\tstar)^{-4})\right).
\]
In the product
\[
  F(\tau_N)
  =\frac{e^{-2\tau_N^2}}{16\tau_N^{5/2}}
  \left(1-\frac{35}{32\tau_N^2}
  +\ldots\right),
\]
the $-35/32$ from the correction factor
and the $+35/32$ from inverting the defining equation
\emph{cancel at leading order}. The $1/t^2$
terms cancel, leaving only $O(t^{-4})$ residuals.
The remaining terms are $s^2/8$ from the exponent
expansion in Step~1 and $5s/8$ from the denominator
expansion in Step~2, giving a total correction
of $(s^2+5s)/8$:

\[
  F(\tau_N)
  = \pi N^{-1/4}e^{-s}
  \left(
    1-\frac{s^2+5s}{8(\tstar)^2}
    +O((\tstar)^{-4})
  \right).
\]

\medskip
\noindent\textit{Step 5: Convert to $1/\ln N$.}
Substituting
\[
  \frac{1}{(\tstar)^2}
  =\frac{8}{\ln N}
  +O\!\left(\frac{\ln\ln N}{(\ln N)^2}\right),
\]
we obtain
\[
  \frac{s^2+5s}{8(\tstar)^2}
  =\frac{s^2+5s}{\ln N}
  +O\!\left(\frac{\ln\ln N}{(\ln N)^2}\right).
\]
Multiplying by $N^{1/4}/\pi$ gives the result.
The $O(N^{-1/4}(\ln N)^{3/2})$ error from
Lemma~\ref{lem:count} contributes a relative error
$O(N^{-1/4}(\ln N)^{3/2})=o(1/(\ln N)^2)$,
which is negligible.
\end{proof}

\subsection{Proof of Theorem~\ref{thm:rate}}

\begin{proof}[Proof of Theorem~\ref{thm:rate}]
By Lemma~\ref{lem:poisson},
\[
  \Prob(S_N^{(0)}\le s)
  =\exp(-\Lam(\tau_N))
  +O\!\left(\frac{(\ln N)^{1/4}}{N^{1/4}}\right).
\]
By Lemma~\ref{lem:refined_lambda},
\[
  \Lam(\tau_N)
  =e^{-s}\left(
    1-\frac{s^2+5s}{\ln N}
    +O\!\left(\frac{\ell^2}{L^2}\right)
  \right),
\]
where $L=\ln N$ and $\ell=\ln\ln N$.
Exponentiating,
\begin{align*}
  \exp(-\Lam(\tau_N))
  &=e^{-e^{-s}}
  \exp\!\left(
    \frac{e^{-s}(s^2+5s)}{\ln N}
    +O\!\left(\frac{\ell^2}{L^2}\right)
  \right) \\
  &=e^{-e^{-s}}
  \left(
    1+\frac{(s^2+5s)e^{-s}}{\ln N}
    +O\!\left(\frac{\ell^2}{L^2}\right)
  \right),
\end{align*}
using $\exp(x)=1+x+O(x^2)$
with $x=O(1/\ln N)$.
The DPP correction
$O((\ln N)^{1/4}/N^{1/4})$
is $o(1/(\ln N)^2)$, hence negligible.
\end{proof}

\begin{corollary}[Kolmogorov distance]
For any compact $K\subset\R$,
\[
  \sup_{s\in K}
  \left|
    \Prob(S_N^{(0)}\le s)-e^{-e^{-s}}
  \right|
  =\frac{D_K}{\ln N}
  +O\!\left(
    \frac{(\ln\ln N)^2}{(\ln N)^2}
  \right),
\]
where
\[
  D_K
  =\sup_{s\in K}
  \left|
    (s^2+5s)e^{-s}e^{-e^{-s}}
  \right|.
\]
\end{corollary}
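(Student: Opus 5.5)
The corollary should follow directly from Theorem~\ref{thm:rate}, since that expansion is uniform for $s$ in compact sets. Fix a compact $K\subset\R$. By Theorem~\ref{thm:rate},
\[
  \Prob(S_N^{(0)}\le s)-e^{-e^{-s}}
  = \frac{(s^2+5s)e^{-s}e^{-e^{-s}}}{\ln N}
  + e^{-e^{-s}}\,R_N(s),
\]
where $\sup_{s\in K}|R_N(s)|\le C_K(\ln\ln N)^2/(\ln N)^2$. Since $0<e^{-e^{-s}}\le 1$, the second term is bounded by the same quantity uniformly on $K$.

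Write $g(s)=(s^2+5s)e^{-s}e^{-e^{-s}}$. This function is continuous, so $D_K=\sup_K|g|$ is finite and attained. Taking absolute values and then the supremum over $s\in K$ gives the upper bound
\[
  \sup_K|\cdots|\le \frac{D_K}{\ln N}+O\bigl((\ln\ln N)^2/(\ln N)^2\bigr).
\]
For the matching lower bound, evaluate at a point $s_0\in K$ where $|g(s_0)|=D_K$. There the difference equals $g(s_0)/\ln N+O(\ell^2/L^2)$, so the supremum is at least $D_K/\ln N-O(\ell^2/L^2)$. Together these give equality up to the stated error.

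The only step that needs care is making sure the $O(\cdot)$ term in Theorem~\ref{thm:rate} really is uniform on $K$. I would check this by going back through Lemma~\ref{lem:refined_lambda} and the exponentiation in the proof of Theorem~\ref{thm:rate}. There, every implicit constant depends on $s$ only through polynomials in $|s|$, which are bounded on $K$. The DPP error from Lemma~\ref{lem:poisson} also needs to be uniform in $s\in K$. For that I would note that $\Lam(\tau_N)$ is bounded on $K$, so the bound $\Sigma e^{\Sigma}$ stays uniformly small. With uniformity in hand, the rest is the elementary sup argument above. If $D_K=0$ (for instance $K=\{0\}$), the statement reduces to the error term alone, which is consistent.
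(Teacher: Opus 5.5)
Your proposal is correct and follows the same route as the paper: invoke the uniformity on compact sets of the expansion in Theorem~\ref{thm:rate}, then take the supremum of the absolute value over $K$. You also make explicit the matching lower bound, obtained by evaluating at a maximizer of $|(s^2+5s)e^{-s}e^{-e^{-s}}|$. The paper's one-line proof leaves that step implicit, but it is needed for the equality the corollary asserts.
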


\begin{proof}
The expansion in Theorem~\ref{thm:rate} is
uniform on compact $K$; all asymptotic
expansions in
Lemmas~\ref{lem:refined_tail}--\ref{lem:refined_lambda}
are uniform for $s$ in compact subsets of $\R$.
Taking the supremum over $s\in K$ of the
absolute value gives the stated bound.
\end{proof}

\section{Discussion}

\subsection{Summary of Results}

We have established three results:

\begin{enumerate}
  \item \textbf{Subcritical spike irrelevance}
  (Theorem~\ref{thm:main}):
  For any fixed $\sigma<1$, the Gumbel
  limit of $\max_i\Real(\lambda_i)$
  is the same as for the unspiked ensemble.
  The spike eigenvalue remains inside the bulk
  and cannot compete with the bulk edge at
  \[
    1+\sqrt{\frac{\ln N}{8N}}.
  \]

  \item \textbf{Convergence rate}
  (Theorem~\ref{thm:rate}):
  The Gumbel limit converges at rate
  $O(1/\ln N)$ with explicit correction
  \[
    c_1(s)=(s^2+5s)e^{-s}.
  \]
  The key mechanism is the exact
  cancellation of the $35/32$ correction
  from the tail asymptotic with the
  correction from the defining equation.

  \item \textbf{Upper bound on outlier position}
  (Theorem~\ref{thm:upper}):
  At the BBP critical point $\sigma=1$,
  the outlier satisfies
  \[
    \E[\Real(z_{\mathrm{out}})]
    \le 1+C_0N^{-1/4}+o(N^{-1/4}),
  \]
  proven via the isotropic local law
  and a net argument.
  The key mechanism is that the resolvent
  fluctuation $O(N^{-3/8})$ at distance
  $N^{-1/4}$ from the disk is much smaller
  than the gap $O(N^{-1/4})$ between
  $|\E[G_{11}(z)]|=1/|z|$ and $1$,
  so the outlier cannot lie beyond
  $1+C_0N^{-1/4}$.
\end{enumerate}

\subsection{Comparison with Prior Work}

The Gumbel limit for the pure Ginibre
ensemble (Theorem~\ref{thm:gumbel_pure})
was established by \citet{rider2007}.
Our DPP/Poisson proof
(Sections~2--3) provides an
independent verification and additionally
yields the convergence rate
(Theorem~\ref{thm:rate}), namely the
explicit $O(1/\ln N)$ correction with
coefficient
\[
  c_1(s)=(s^2+5s)e^{-s}.
\]

\subsection{Comparison with the Maximum Modulus}

The maximum modulus $\max_i|\lambda_i|$
has been studied by \citet{rider2003},
who proved Gumbel fluctuations with constant
$\sqrt{\pi/24}\approx0.362$
for the standard deviation.
For the real Ginibre ensemble,
\citet{riderSinclair2014} established
analogous extremal laws.
Our result for
$\max_i\Real(\lambda_i)$
gives the constant
$\pi/\sqrt{12}\approx0.907$,
which is larger by a factor of approximately $2.5$.
This reflects the different geometry:
the maximum modulus probes the entire
circle $|z|=1$,
while the maximum real part probes only
the rightmost point.

\subsection{The BBP Critical Point}

The spiked non-Hermitian model was studied
by \citet{benaych2012}, who proved that
for $\sigma>1$, an outlier separates
from the bulk at
$z_{\mathrm{out}}\approx\sigma$,
and for $\sigma<1$,
all eigenvalues remain in the bulk.

Our Theorem~\ref{thm:main} resolves the
subcritical case: the Gumbel limit is
the same as for the unspiked ensemble.

At the critical point $\sigma=1$,
Theorem~\ref{thm:upper} gives the unconditional
upper bound
\[
  \E[\Real(z_{\mathrm{out}})]
  \le 1+C_0N^{-1/4}+o(N^{-1/4}).
\]

\subsection{Slow Convergence}

The convergence rate $O(1/\ln N)$
is logarithmically slow.
At $N=5000$,
the correction is approximately $12\%$,
and the pre-asymptotic regime---where
the distribution appears non-Gumbel---persists
for moderate $N$.
This explains why finite-$N$ simulations
can be misleading:
the Gumbel limit emerges only for
$N\gg N_c$, where $N_c$ is determined
by the accuracy of the tail asymptotic
\[
  F(t)\sim\frac{e^{-2t^2}}{16t^{5/2}}.
\]

\section*{Acknowledgments}

The author is deeply grateful to his wife, Yuxuan Guo, for her
unwavering spiritual support, patience, and encouragement
throughout this work.

\end{document}